\numberwithin{equation}{section}
\newtheorem{theorem}{Theorem}[section]
\newtheorem{lemma}[theorem]{Lemma}
\newtheorem{proposition}[theorem]{Proposition}
\newtheorem{definition}[theorem]{Definition}
\def\eps{\varepsilon }
\def\D{\partial }
\newcommand{\RR}{\mathbb{R}}
\newcommand{\cO}{\mathcal{O}}
\newcommand{\cX}{\mathcal{X}}
\newcommand{\dt}{\frac{d}{dt}}
\newcommand{\ZZ}{{\mathbb Z}}
\newcommand{\TT}{{\mathbb T}}
\newcommand{\e}{{\varepsilon}}
\def\bb1{{1\!\!1}}
\def\cL{\mathcal{L}}
\def \tu{\tilde{u}}
\def \tv{\tilde{v}}
\def\D{\partial}
\def\dt{\partial_t}
\def\dx{\partial_x}
\def\dy{\partial_y}
\def\dY{\partial_Y}
\newcommand{\T}{{\mathbb T}}
\newcommand{\pa}{{\partial}}
\newcommand{\bmat}{\begin{pmatrix}}
\newcommand{\emat}{\end{pmatrix}}
\newcommand{\bitem}{\begin{itemize}}
\newcommand{\eitem}{\end{itemize}}
\def\myskip{\bigskip\noindent}
\begin{document}

\title[A note on the Prandtl boundary layers]{A note on the Prandtl boundary layers}

\author[Y. Guo, T. Nguyen]{Yan Guo, Toan Nguyen}

\address{Division of Applied Mathematics, Brown University, Providence, RI, USA}
\email{Yan$\underline{~}$Guo@Brown.edu;\;Toan$\underline{~}$Nguyen@Brown.edu}


\begin{abstract} This note concerns a nonlinear ill-posedness of the Prandtl equation and an invalidity of asymptotic boundary-layer expansions of incompressible fluid flows near a solid boundary. Our analysis is built upon recent remarkable linear ill-posedness results established by G\'erard-Varet and Dormy \cite{GD}, and an analysis in Guo and Tice \cite{GT}. We show that the asymptotic boundary-layer expansion is not valid for non-monotonic shear layer flows in Sobolev spaces. We also introduce a notion of weak well-posedness and prove that the nonlinear Prandtl equation is not well-posed in this sense near non-stationary and non-monotonic shear flows.  On the other hand, we are able to verify that Oleinik's monotonic solutions are well-posed. 

\end{abstract}

\date{Last updated: \today}

\maketitle

\tableofcontents

\section{Introduction}
One of classical problems in fluid dynamics is the vanishing viscosity limit of Navier-Stokes solutions near a solid boundary. To describe the problem, let us consider the two-dimensional incompressible Navier-Stokes equations:
\begin{equation}\label{NSeqs} \begin{aligned}\dt \bmat u^\nu\\v^\nu\emat + (u^\nu \dx + v^\nu \dy) \bmat u^\nu\\v^\nu\emat + \nabla p^\nu &= \nu \Delta \bmat u^\nu\\v^\nu\emat \\ \dx u^\nu+ \dy v^\nu &=0.\end{aligned}
\end{equation}
Here, $(x,y)\in \TT \times \RR_+$ and $(u^\nu,v^\nu)\in \RR\times \RR$ are the tangential and normal components of the velocity, respectively, corresponding to the boundary $y=0$. We impose the no-slip boundary conditions: $(u^\nu,v^\nu)_{|y=0} =0.$
A natural question is how one relates solutions of the Navier-Stokes equations to those of the Euler equations (i.e., equations \eqref{NSeqs} with $\nu=0$) with boundary condition $v^0\vert_{y=0}=0$ in the zero viscosity limit? Formally, 
one may expect an asymptotic description as follows:
\begin{equation}\label{exp-nu}\bmat u^\nu\\v^\nu\emat (t,x,y) = \bmat u^0\\v^0\emat(t,x,y) + \bmat u_p\\\sqrt \nu v_p\emat(t,x,y/{\sqrt\nu})\end{equation}
where $(u^0,v^0)$ solves the Euler equation and $(u_p,v_p)$ is the boundary layer correction that describes the transition near the boundary from zero velocity $u^\nu$ of the Navier-Stokes flow to the potentially nonzero velocity $u^0$ of the Euler flow and thus plays a significant role in the thin layer with order $\cO(\sqrt\nu)$. We may also express the pressure $p^\nu$ as $$p^\nu (t,x,y) = p^0(t,x,y) + p_p(t,x,\frac y{\sqrt\nu}).$$
We then can formally plug these formal Ansatz into \eqref{NSeqs} and derive the boundary layer equations for $(u_p,v_p)$ at the leading order in $\sqrt\nu$.  For our convenience, we denote $Y = y/\sqrt\nu$ and define $$\begin{aligned}u(t,x,Y) &:= u^0(t,x,0) + u_p(t,x,Y),\\
v(t,x,Y) &:= \dy v^0(t,x,0) Y + v_p(t,x,Y).\end{aligned}$$The boundary layer or Prandtl equation for $(u,v)$ then reads:  
\begin{equation}\label{prandtl}
\left\{
\begin{array}{rlll} 
\D_t u  +  u \D_x u +  v \D_Y  u - \D^2_Y  u  + \D_x P &=& 0,  \quad Y > 0,\\
\D_x  u + \D_Y v & = & 0,  \quad  Y > 0, \\
u\vert_{t=0} &=&u_0(x,y)\\
u \vert_{Y=0}= v\vert_{Y=0}  &=& 0, \\ 
\lim_{Y\rightarrow+\infty}  u & = & U(t,x),   \\  
\end{array}
\right.
\end{equation} where $U = u^0(t,x,0)$ and $P=P(t,x)$ are the normal velocity and pressure describing the Euler flow just outside the boundary layer, and satisfy the Bernoulli equation 
$$\D_t U + U\D_xU + \D_x P =0. $$

\myskip 
This formal idea was proposed by Ludwig Prandtl \cite{Pra:1904} in 1904 to describe the fluid flows near the boundary. Mathematically, we are interested in the following two problems:

\bitem

\item well-posedness of the Prandtl equation \eqref{prandtl};

\item rigorous justification of the asymptotic boundary layer expansion.   

\eitem

\myskip
Sammartino and Caflisch \cite{SC} resolved these issues in an analytic setting where the initial data and the outer Euler flow are assumed to be analytic functions. Oleinik \cite{Ole} established the existence and uniqueness of the Cauchy problem \eqref{prandtl} in a monotonic setting where the initial and boundary data are assumed to be monotonic in $y$ along the boundary-layer profile. For further mathematical results, see the review paper \cite{E:2000}.  In this paper, we address the above issues in a Sobolev setting. Our work is based on a recent result of G\'erard-Varet and Dormy \cite{GD} where they established ill-posedness for the Cauchy problem of the linearized Prandtl equation around non-monotonic shear flows. 

\myskip

In what follows, we shall work with the Euler flow which is constant on the boundary, that is, $U \equiv const.$ Also, by a shear flow to the Prandtl, we always mean that a special solution to \eqref{prandtl} has a form of $(u_s,0)$ with $u_s= u_s(t,Y)$. Thus, $u_s$ solves the heat equation:
\begin{equation}\label{shear-profile}
\left\{
\begin{array}{rlll} 
\D_t u_s&=& \dY^2 u_s,  \qquad Y > 0,\\
u_s\vert_{t=0} &=& U_s,
\end{array}
\right.
\end{equation} with initial shear layer $U_s$, and with the same boundary conditions at $Y=0$ and $Y=+\infty$ as in \eqref{prandtl}.

\myskip

We shall work on the standard Sobolev spaces $L^2$ and $H^m$, $m\ge 0$, with usual norms:
$$\|u\|_{L^2_{x,Y}}: = \Big(\int_{ \TT\times \RR_+} |u|^2 dxdY\Big)^{1/2} \quad \mbox{and}\quad \|u\|_{H^m_{x,Y}}:= \sum_{k=0}^m \sum_{i+j = k}\|\dx^i \dY ^ju\|_{L^2}.$$
For initial data, we will often take them to be in a weighted $H^m$ Sobolev spaces. For instance, we say $u_0\in e^{-\alpha Y}H^m_{x,Y}$ if $e^{\alpha Y}u_0 \in H^m_{x,Y}$ and has a finite norm, for some $\alpha>0$ (see, for example, \cite{GD,GVN} where this type of weighted spaces is used for initial data). We occasionally drop the subscripts ${x,Y}$ in $H^m_{x,Y}$ when no confusion is possible, and write $H_\alpha^m$ to refer to the weighted space $e^{-\alpha Y}H^m_{x,Y}$.  


%

\myskip

To state our results precisely, we introduce the following definition of well-posedness; here, we say that $u$ belongs to $U+ \cX$, for some functional space, to mean that $u-U\in \cX$.


\begin{definition}[Weak well-posedness]\label{def-wellposed} For a given Euler flow $u^0,$ denote $U(t,x)=u^0(t,x,0).$ We
say {\bf the Cauchy problem \eqref{prandtl} is locally weak well-posed} if there exist positive continuous functions $
T(\cdot,\cdot)$, $C(\cdot ,\cdot )$, some $\alpha>0$, and some integer $m\geq 1$ such that for any initial data $ u^1_0,u_0^2$ in $ U+
e^{-\alpha Y}H^{m}_{x,Y}(\T \times \RR_+)$, 
there are unique distributional solutions $u_1,u_2$ of \eqref{prandtl} in $U+ L^\infty(]0,T[; H^1_{x,Y}(\T \times \RR_+))$ with initial data ${u_j}\vert_{t=0} = u^j_0$, $j=1,2$, and there holds
\begin{equation}\label{stab-ineq} 
\begin{aligned}\sup_{0\leq t\leq T}&\|u_{1}(t)-u_{2}(t)\|_{H_{x,Y}^{1}}
\\&\leq C(\|e^{\alpha
Y}[u_0^1-U]\|_{H_{x,Y}^{m}},\|e^{\alpha Y}[u_0^2-U]\|_{H_{x,Y}^{m}})\|e^{\alpha Y}[u_0^1-u_0^2]\|_{H_{x,Y}^{m}},
\end{aligned}
\end{equation}
in which $T = T(\|e^{\alpha
Y}[u_0^1-U]\|_{H_{x,Y}^{m}},\|e^{\alpha Y}[u_0^2-U]\|_{H_{x,Y}^{m}})$.
\end{definition}

We note that when we choose $u_{2}\equiv 0$ in the above definition, we obtain an estimate for solutions in the $H_{x,Y}^{1}$ space. We call such a well-posedness weak because we allow the initial data to be in $H^m_{x,Y}$ for sufficiently large $m$.



\myskip
Our first main result then reads
\begin{theorem}[No Lipschitz continuity of the flow] \label{theo-illposed} The Cauchy problem \eqref{prandtl} is not  locally 
weak well-posed in the sense of Definition \ref{def-wellposed}. 
 \end{theorem}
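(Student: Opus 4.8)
The plan is to disprove weak well-posedness by exhibiting, for any candidate choice of $\alpha>0$, $m\geq 1$, and functions $T(\cdot,\cdot)$, $C(\cdot,\cdot)$, a one-parameter family of initial data violating the stability estimate \eqref{stab-ineq}. The natural strategy is to linearize around a non-monotonic shear flow $u_s(t,Y)$ solving \eqref{shear-profile} and to transfer the linear instability of G\'erard-Varet and Dormy \cite{GD} to the nonlinear equation. Concretely, I would fix a time-independent (or slowly varying) non-monotonic profile $U_s$ with a non-degenerate critical point, so that the linearized Prandtl operator around $u_s$ admits, via \cite{GD}, a sequence of approximate solutions (built from unstable eigenmodes or quasimodes at high tangential frequency $n$) whose growth rate behaves like $e^{\delta\sqrt{n}\,t}$ for some $\delta>0$. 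Such a spectral instability means the solution operator, even if it exists, cannot be bounded from any $H^m$ to $H^1$ uniformly on a fixed time interval: derivatives in $x$ cost powers of $n$, but the exponential-in-$\sqrt n$ amplification eventually beats any fixed polynomial loss $n^m$ together with any fixed constant $C$ on any fixed $T>0$.

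The key steps, in order, would be: (i) choose $u_2 = u_s$ the shear solution itself (which lies in $U + e^{-\alpha Y}H^m$ for suitable decay of $U_s - U$), and let $u_1 = u_s + \phi_n$ where $\phi_n$ is a genuine solution of the nonlinear Prandtl equation \eqref{prandtl} with initial data $u_s|_{t=0} + \psi_n$, $\psi_n$ a high-frequency perturbation of small but normalized size; (ii) invoke the linear ill-posedness of \cite{GD} to produce, for the difference $w = u_1 - u_2$, a lower bound showing that the linear flow amplifies $\|\psi_n\|_{H^m}$ to something like $e^{\delta\sqrt n \,t}\|\psi_n\|_{H^m}$ in $H^1$ over the time scale where the linearization is valid; (iii) control the nonlinear remainder — since $\|\psi_n\|$ can be taken as small as we like for fixed $n$ (say of order $\e \, e^{-\delta\sqrt n\, T/2}$), a standard bootstrap/Duhamel argument shows the genuine nonlinear solution stays close to the linear one on $[0,T]$, so the nonlinear difference still exhibits the amplification; (iv) choose the free parameters so that for the prescribed $T, C, m$ the estimate \eqref{stab-ineq} fails: pick $n$ so large that $e^{\delta \sqrt n\, T}$ dominates $C\big(\|e^{\alpha Y}[u_s|_{t=0}-U]\|_{H^m}, \text{small}\big)\cdot n^{m}$, then scale the perturbation amplitude down so both initial data sit in a fixed ball (hence $T$ and $C$ are fixed) while the ratio of output $H^1$-norm to input $H^m$-norm blows up.

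A subtlety is that Definition \ref{def-wellposed} already grants existence and uniqueness of the solutions $u_1, u_2$ as part of the hypothesis to be contradicted; so strictly one argues: \emph{if} such $u_1$ existed (it does for $u_2$, being the shear flow), then the estimate would force a bound incompatible with the \cite{GD} lower bound, hence no such $C, T, m, \alpha$ can exist. This lets us sidestep having to independently construct nonlinear solutions in full generality — we only need the shear solution $u_s$ itself plus the approximate-solution machinery to generate a \emph{sequence} of data for which any hypothetical solution operator is unbounded. One should also make sure the perturbations $\psi_n$ can be chosen in the weighted space $e^{-\alpha Y}H^m$ with the \emph{same} $\alpha$: the \cite{GD} unstable modes decay like $e^{-cY}$ or faster (and one can multiply by a fixed cutoff), so for $\alpha$ small enough this is automatic, and if $\alpha$ is large one localizes the mode near a bounded $Y$-window.

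The main obstacle I anticipate is step (iii), the nonlinear error control: one must show that the Prandtl semiflow around the non-monotonic shear stays within $O(1)$ of the linearized flow on the relevant time interval despite the derivative loss inherent in Prandtl (the $v\,\partial_Y u$ term loses one tangential derivative). Rather than proving nonlinear well-posedness — which is false here — the clean route is to \emph{avoid} it: take $u_2$ to be the exact shear solution and, for $u_1$, note the difference $w=u_1-u_2$ solves a Prandtl-type equation linearized around $u_s$ plus a quadratic term in $w$; since the hypothesized estimate \eqref{stab-ineq} would in particular bound $\|w(t)\|_{H^1}$ in terms of $\|w(0)\|_{H^m}$, one can feed into this the \cite{GD} construction (which is essentially a statement that the linearized solution operator is unbounded from $H^m$ to $L^2$ for every $m$) to reach a contradiction without ever needing to solve the nonlinear problem for $u_1$ globally — the nonexistence of a Lipschitz estimate is exactly what \cite{GD}'s growing quasimodes deliver once one checks the quadratic correction is lower-order along them. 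I expect the bulk of the technical work to be verifying this quasimode stability in the weighted Sobolev scale and tracking the frequency-dependence of all constants.
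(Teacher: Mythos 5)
Your overall skeleton (argue by contradiction, perturb the shear flow, transfer the G\'erard-Varet--Dormy linear instability to the nonlinear problem) matches the paper, and you correctly identify the crux: controlling the quadratic remainder without any nonlinear well-posedness theory. But your proposed resolution of that crux is not the paper's, and as stated it has a genuine gap. Both of your suggested routes --- a ``standard bootstrap/Duhamel argument'' showing the nonlinear solution tracks the linear one, or ``checking the quadratic correction is lower-order along the quasimodes'' --- require a quantitative nonlinear estimate around high-frequency data. That is precisely the Grenier-type analysis that is unavailable here: the nonlinearity $u\,\D_x u + v\,\D_Y u$ loses a tangential derivative, so along a frequency-$n$ quasimode the quadratic term is \emph{not} manifestly lower order, and there is no local theory in Sobolev spaces around a non-monotonic shear to run a bootstrap on. You gesture at the right alternative (``the hypothesized estimate \eqref{stab-ineq} would bound $\|w(t)\|_{H^1}$ in terms of $\|w(0)\|_{H^m}$'') but never extract from it the mechanism that actually kills the nonlinearity.

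The missing idea is a two-parameter limit in the style of Guo--Tice \cite{GT}: \emph{fix} the frequency index $n$ (hence the G\'erard-Varet--Dormy datum $u_0^n$ with $\|u_L^n(t_n)\|_{L^2}\ge n$), take initial data $u_{s_0}(s_n)+\delta u_0^n$, and send the \emph{amplitude} $\delta\to 0$ first. The hypothesized Lipschitz estimate, applied to this solution and to the translated shear flow $u_{s_0}(t+s_n)$, gives a bound $C\delta$ in $H^1$, so the rescaled difference $u^{\delta,n}=\delta^{-1}(v^{\delta,n}-u_{s_n})$ is bounded in $L^\infty(0,T;H^1)$ \emph{uniformly in} $\delta$ and solves the linearized equation up to $\delta N(u^{\delta,n})$. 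Writing $N$ in divergence form $-\D_x(u^{\delta,n})^2-\D_Y(u^{\delta,n}v^{\delta,n})$, the uniform $H^1$ bound alone shows $\delta N(u^{\delta,n})\to 0$ in the sense of distributions; the weak$^*$ limit $u^n$ therefore solves the \emph{exact} linearized equation, and the linear uniqueness result (Proposition \ref{theo-uniquelin}) identifies it with $u_L^n$, whose $L^2$ norm at $t_n$ exceeds $n$ --- contradicting the $\delta$-uniform, $n$-independent $H^1$ bound. No quasimode stability, no Duhamel iteration, and no frequency-dependence tracking is needed; the only inputs are the hypothesized estimate itself, weak compactness of bounded sets in $H^1$, and uniqueness for the linear flow. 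Without this limiting argument (or an equivalent substitute), your step (iii) does not close.
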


\myskip

Our result is an improvement of a recent result obtained by D. G\'erard-Varet and the second author \cite{GVN} without additional sources in the Prandtl equation. 
In Section \ref{sec-Ole}, we will show that in the monotonic framework of Oleinik (see Assumption (O) in Section \ref{sec-Ole}), the Cauchy problem \eqref{prandtl} is well-posed in the sense of Definition \ref{def-wellposed}. The key idea is to use the Crocco transformation to obtain certain energy estimates for $\D_xu$. We note that as shown in \cite{GD}, the ill-posedness in the non-monotonic case is due to high-frequency in $x$ and the lack of control on $\D_x u$ in the original coordinates in \eqref{prandtl}. 

\myskip

Finally, regarding the validity of the asymptotic boundary layer expansion, we ask whether one can write 
\begin{equation}\label{non-expansion}
\bmat u^\nu\\v^\nu\emat (t,x,y) = \bmat u^0-u^0\vert_{y=0}\\ 0\emat (y)+\bmat u_{s}\\ 0\emat(t,\frac y{\sqrt\nu}) + (\sqrt\nu)^{\gamma} \bmat \tu^\nu\\ \sqrt \nu \tv^\nu\emat(t,x,\frac y{\sqrt\nu}),\end{equation}
and 
$$p^\nu (t,x,y) = (\sqrt\nu)^{\gamma}\tilde p^\nu(t,x,\frac y{\sqrt\nu}), $$
for shear flows $u_s$ and for some $\gamma>0$, where $(u^0(y),0)^{t}$ is the Euler flow. Our second main result asserts that this is false in general, for all $\gamma>0$. Again, to state our result precisely, we introduce the following definition of validity of the asymptotic expansion in Sobolev spaces. 
 
 \begin{definition}[Validity of asymptotic expansions]\label{def-valid-expansion} For a given Euler flow $u^0 = u^0(y)$, denote $U = u^0(0)$. We say {\bf the expansion \eqref{non-expansion} is valid with a $\gamma>0$} if there exist positive continuous functions $
T_\gamma(\cdot,\cdot)$, $C_\gamma(\cdot ,\cdot )$, some $\alpha>0$, and some integers $m'\ge m\geq 1$ such that for any initial shear layer $U_s$ in $U+e^{-\alpha Y}H^{m'}_Y(\RR_+)$ and any initial data $\tu_0,\tv_0$ in $e^{-\alpha Y}H^{m}_{x,Y}(\T \times \RR_+)$,  we can write \eqref{non-expansion} in $L^\infty([0,T_\gamma]; H^1_{x,Y}(\T \times \RR_+))$ with $u_s(0) = U_s$, $(\tu^\nu,\tv^\nu)\vert_{t=0}=(\tu_0,\tv_0)$, and $\tilde p^\nu \in  L^\infty([0,T_\gamma]; L^2_{x,Y}(\T \times \RR_+))$, and there holds 
\begin{eqnarray*}
\sup_{0\leq t\leq T_\gamma} \|(\tilde{u}^\nu(t),\tilde{v}^\nu
(t))\|_{H_{x,Y}^{1}} \leq C_{\gamma }(\|e^{\alpha Y}(U_s-U)\|_{H_{Y}^{m'}}, \|e^{\alpha Y}(
\tu_0,\tv_0)\|_{H_{x,Y}^{m}}),
\end{eqnarray*}
in which $T_\gamma = T_{\gamma }(\|e^{\alpha Y}(U_s-U)\|_{H_{Y}^{m'}}, \|e^{\alpha Y}(
\tu_0,\tv_0)\|_{H_{x,Y}^{m}})$. 
  \end{definition}

Our second main result then reads
 \begin{theorem}[Invalidity of asymptotic expansions] \label{theo-noexpansion} The expansion \eqref{non-expansion} is not valid in the sense of Definition \ref{def-valid-expansion} for any $\gamma>0$. 
\end{theorem}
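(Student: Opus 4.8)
\medskip
\noindent\textbf{Proof strategy for Theorem~\ref{theo-noexpansion}.}
The plan is to reduce the invalidity of the Navier--Stokes expansion \eqref{non-expansion} to the high-frequency instability of the Prandtl equation linearized about a non-monotonic shear layer, i.e.\ to the very mechanism underlying Theorem~\ref{theo-illposed} and \cite{GD,GVN}. It suffices to exhibit one Euler flow and one family of data for which Definition~\ref{def-valid-expansion} fails for every $\gamma>0$, and the cleanest choice is the trivial Euler flow $u^0\equiv U\equiv\text{const}$: then the first term of \eqref{non-expansion} vanishes and, since $u_s$ solves the heat equation \eqref{shear-profile}, the pair $(u_s(t,y/\sqrt\nu),0)$ is an \emph{exact} solution of \eqref{NSeqs} with zero pressure. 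Substituting \eqref{non-expansion} into \eqref{NSeqs} and dividing by $(\sqrt\nu)^\gamma$, the remainder $(\tu^\nu,\tv^\nu)$ is found to solve the Prandtl system \emph{linearized about $u_s$},
\begin{equation*}
\partial_t\tu^\nu+u_s\,\partial_x\tu^\nu+\tv^\nu\,\partial_Y u_s-\partial_Y^2\tu^\nu
=-(\sqrt\nu)^{\gamma}\big(\tu^\nu\,\partial_x\tu^\nu+\tv^\nu\,\partial_Y\tu^\nu\big)+\nu\,\partial_x^2\tu^\nu+\mathcal{R}^\nu,
\end{equation*}
together with $\partial_x\tu^\nu+\partial_Y\tv^\nu=0$, $\tu^\nu\vert_{Y=0}=\tv^\nu\vert_{Y=0}=0$ and $\tu^\nu\to0$ as $Y\to+\infty$; here the pressure gradient is $O(\nu)$ (its $Y$-independent Prandtl part has vanishing $x$-derivative because the outer trace of $\tu^\nu$ is zero) and has been absorbed, together with the $O(\nu)$ error from the $Y$-dependence of $\tilde p^\nu$, into the residual $\mathcal{R}^\nu$. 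Thus, compared with the linearized Prandtl operator $\mathcal{L}_s$ on the left, the remainder equation carries only three extra terms: the viscous term $\nu\,\partial_x^2\tu^\nu$, the quadratic nonlinearity with small coefficient $(\sqrt\nu)^\gamma$, and the $O(\nu)$ residual $\mathcal{R}^\nu$. (A general nonconstant $u^0$ adds further residuals which, near the boundary layer, are $O(\sqrt\nu)$ and are treated the same way.)

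Now argue by contradiction: suppose \eqref{non-expansion} is valid with some $\gamma>0$, with the constants $T_\gamma,C_\gamma,\alpha,m,m'$ of Definition~\ref{def-valid-expansion}. Choose $U_s\in U+e^{-\alpha Y}H^{m'}_Y(\RR_+)$ non-monotonic, with a non-degenerate critical point so that the spectral instability of \cite{GD} applies; then the heat flow $u_s(t,\cdot)$ stays non-monotonic on a fixed interval $[0,t_*]$, and $\mathcal{L}_s$ admits, along a sequence of tangential frequencies $k\to+\infty$, approximate modes $e^{ikx}\phi_k(Y)$ that are exponentially localized near the critical point, lie in $e^{-\alpha Y}H^{m'}_{x,Y}$ with norms polynomial in $k$, have vanishing $Y$-mean (so the associated $\tv$ lies in $L^2$), and whose linearized evolution grows at least like $e^{\sigma\sqrt k\,t}$ on $[0,t_*]$ for a fixed $\sigma>0$. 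Take $(\tu_0,\tv_0)$ to be such a mode at frequency $k$, rescaled so that $\|e^{\alpha Y}(\tu_0,\tv_0)\|_{H^m_{x,Y}}=1$; then $\delta_k:=\|(\tu_0,\tv_0)\|_{H^1_{x,Y}}$ decays only polynomially in $k$. Finally choose $\nu=\nu(k)\to0$ so rapidly (for instance $\nu(k)=k^{-k}$) that $\nu k^2\to0$ and that $(\sqrt\nu)^\gamma$ and $\mathcal{R}^\nu$ are smaller than any negative power of $k$ in every Sobolev norm that occurs.

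By the validity hypothesis, the corresponding $(\tu^\nu,\tv^\nu)$ exists on $[0,T_\gamma]$ with $\sup_{[0,T_\gamma]}\|(\tu^\nu,\tv^\nu)\|_{H^1_{x,Y}}\le C_\gamma(\,\cdot\,,1)=:c_0$, a constant independent of $k$; likewise $T_\gamma$ is now a fixed number. Set $t_0:=\tfrac12\min(T_\gamma,t_*)>0$. Feeding the remainder equation into the nonlinear iteration scheme used to prove Theorem~\ref{theo-illposed} — with the three extra terms now super-polynomially small in $k$ and hence harmless, since $k\log k$ dominates $\sqrt k$ in all the exponential factors that arise — one obtains that $(\tu^\nu,\tv^\nu)(t)$ stays within a fixed factor of its linearized evolution on $[0,t_0]$, so that $\|(\tu^\nu,\tv^\nu)(t_0)\|_{H^1_{x,Y}}\ge\tfrac12\,\delta_k\,e^{\sigma\sqrt k\,t_0}$. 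As $k\to+\infty$ the right-hand side exceeds $c_0$, contradicting the uniform bound; hence no $\gamma>0$ works, which is the assertion.

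The main obstacle is this iteration step, which rests on controlling the \emph{full} semigroup generated by $\mathcal{L}_s+\nu\,\partial_x^2$ at frequency $k$ — not merely the unstable eigendirection — so as to confirm that the $O(\nu)$ viscous term does not quench the $\sqrt k$ instability and that the error terms, propagated over the fixed time $t_0$, remain negligible against $\delta_k e^{\sigma\sqrt k t_0}$. This is precisely the linear analysis of \cite{GD,GVN} already used in the proof of Theorem~\ref{theo-illposed}; since here every perturbation can be made arbitrarily small by shrinking $\nu=\nu(k)$, only the robustness of that construction is needed, and no new linear estimate. The remaining ingredients — the bookkeeping in deriving the remainder equation, the $O(\nu)$ bound on $\mathcal{R}^\nu$, and the compatibility of the instability modes with the weighted Sobolev framework (the zero-$Y$-mean condition ensuring $\tv^\nu\in L^2$) — are routine.
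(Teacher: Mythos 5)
Your reduction of the expansion to a perturbed linearized Prandtl equation for the remainder is consistent with the paper's computation, and your choice of a non-monotonic shear layer with a non-degenerate critical point is the right source of instability. But the core step of your argument --- the ``iteration'' showing that $(\tu^\nu,\tv^\nu)(t_0)$ actually grows like $\delta_k e^{\sigma\sqrt{k}\,t_0}$ --- contains a genuine gap, and you have misidentified what \cite{GD,GVN} provide. Their result is that the solution operator $T(s,t)$ of the linearized Prandtl equation is \emph{unbounded} from $H^m_\alpha$ to $L^2$; they construct only \emph{approximate} growing modes, and the transfer of growth from the approximate mode to an exact solution is done by Duhamel \emph{under the contradiction hypothesis} that $T(s,t)$ is bounded. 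In your scheme you need exactly such a bound on the (perturbed) semigroup to propagate the residual $\mathcal{R}^\nu$, the viscous term and the quadratic term over the fixed time $t_0$ and conclude they are ``harmless''; no such bound exists (indeed Theorem \ref{theo-GD} asserts its failure), and the uniform $H^1$ bound $c_0$ you extract from Definition \ref{def-valid-expansion} controls solutions emanating from admissible data, not the action of the flow on source terms. Smallness of $\nu(k)$ does not rescue this: an arbitrarily small source fed into an unbounded solution operator need not give a small contribution. A secondary quantitative issue: the G\'erard-Varet--Dormy growth is realized only over shrinking time windows $t\sim \sqrt{\e}|\ln\e|$ around suitably translated profiles (hence the times $s_n\le t_n\le\e_0$ and the translated shear layers $u_{s_0}(\cdot+s_n)$ in the paper), not over a fixed interval $[0,t_0]$ with rate $e^{\sigma\sqrt{k}t}$ as you assume.

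The paper avoids both difficulties by a different mechanism, which is the idea your proposal is missing: for each fixed $n$ it uses the assumed uniform $H^1$ bound to extract a \emph{weak limit} of $(\tu^{\nu,n},\tv^{\nu,n})$ as $\nu\to 0$, shows (using weak $H^1$ compactness, which is why $H^1$ rather than $L^2$ is essential) that all the $O((\sqrt\nu)^\gamma)$ and $O(\nu)$ terms drop out so the limit solves the \emph{exact} linearized Prandtl equation, and then invokes uniqueness of the linearized flow (Proposition \ref{theo-uniquelin}) to identify the limit with the exact linear solution $u^n_L$ whose $L^2$ norm exceeds $2n$ by \eqref{GD-illposed}. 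The contradiction with the $n$-independent constant $C_\gamma$ is then immediate, and at no point does one need a lower bound on the actual solution at fixed $\nu$ beyond weak lower semicontinuity of the norm. To repair your proof you would have to replace the iteration step by this weak-limit-plus-uniqueness argument (or else supply semigroup estimates that the linear theory shows cannot hold in these spaces).
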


\def\U{{\bf U}}

We will prove this theorem via contradiction. We show that the expansion does not hold for a sequence of translated 
shear layers $u_{s_n}(t) = u_{s_0}(t+s_n)$, $s_n$ being arbitrarily small, in which the initial shear layer $u_{s_0}(0)$ has a non-degenerate critical point as in \cite{GD}. Hence, if
the Olenick monotone condition is violated, our result indicates that the remainder $\tilde{u}^\nu(t),
\tilde{v}^\nu(t)$ in the asymptotic expansion (1.6) can not be bounded in terms of the initial data in a reasonable fashion.

Our result is different from Grenier's result [4] on invalidity of
asymptotic expansions.  He allows the initial perturbation data to be
arbitrarily small of size $\nu^n$ to a fixed unstable Euler shear flow
(could even be monotone!), and shows that in a very short time of size 
$\sqrt\nu \log(1/\nu)$, the solution u grows rapidly to $\cO(\nu^{1/4})$ in $L^\infty$. In contrast, we have to work with a family of
non-monotone shear flow profiles, and we do not know how badly the
solutions grow. On the other hand,  we show that the expansion is invalid in
order $\cO(\nu^{\gamma })$, for any $\gamma > 0$. In addition,
the blow-up norm in \cite{Gre:2000} is $H^1$ in the original variable y, whereas our result
concerns the (weaker) norm in the stretched variable $Y = y/\sqrt\nu$, upon noting that $\|u(y/\sqrt\nu)\|_{L^2_y(\RR_+)} = \nu^{1/4}\|u(Y)\|_{L^2_Y(\RR_+)} $.

We remark that both our ill-posedness and no expansion theorems are in terms of a weighted $H^1$ norm, which does not even control $L^\infty$ norm in the two-dimensional domain. However, our proofs fail for a weaker space than $H^1$ (e.g., $L^2$), because we need the local compactness of $H^1$ to pass to various limits as $\nu\to 0$.



%
  
  \myskip
  
  \section{Linear ill-posedness}\label{sec-linear}
 In this section, we recall the previous linear ill-posedness results obtained by G\'erard-Varet and Dormy \cite{GD} that will be used to prove our nonlinear illposedness. For notational simplicity, we define
the linearized Prandtl operator $\cL_s$ around a shear flow $u_s$:
$$\cL_s u := -\dY^2 u + u_s\dx u+ v\dY u_s, \qquad  v = -\int_0^Y\dx u dy'.$$

\myskip
With our notation, the nonlinear Prandtl equation \eqref{prandtl} in the perturbation variable $\tilde u := u - u_s$ then reads (dropping the titles):    
\begin{equation}\label{prandtl-pert}
\left\{
\begin{array}{rlll} 
\D_t u +\cL_s u &=& -u\dx u - v\dY u,  \qquad Y > 0,\\
u\vert_{t=0} &=& u_0,
\end{array}
\right.
\end{equation} with zero boundary conditions at $Y=0$ and $Y=\infty$. 

\myskip
Removing the nonlinear term in \eqref{prandtl-pert}, we call the resulting equation as the linearized Prandtl equation around the shear flow $u_s$:
\begin{equation}\label{prandtl-lin}
\D_t u +\cL_s u\;=\;0, \qquad u\vert_{t=0} \;=\;u_0.
\end{equation}
Denote by $T(s,t)$ the linearized solution operator, that is, 
$$T(s,t)u_0 := u(t)$$ where $u(t)$ is the solution to the linearized equation with $u\vert_{t=s} = u_0$. The following ill-posedness result is for the linearized equation \eqref{prandtl-lin}. 
\begin{theorem}[\cite{GD}]\label{theo-GD} There exists an initial shear layer $U_s$  to \eqref{shear-profile} which has a non-degenerate critical point such that for all $\e_0 >0$ and all $m\ge0$, there holds
\begin{equation}\label{GD-illposed} \sup_{0\le s\le t\le \e_0} \|T(s,t)\|_{\cL(H_\alpha^m,L^2)} = +\infty,
\end{equation}
where $ \|\cdot\|_{\cL(H_\alpha^m,L^2)}$ denotes the standard operator norm in the functional space $\cL(H_\alpha^m,L^2)$ consisting of linear operators from the weighted space $H_\alpha^m = e^{-\alpha Y}H^m$ to the usual $L^2$ space.
\end{theorem}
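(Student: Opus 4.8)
The plan is to construct, for a carefully chosen shear profile $U_s$ with a non-degenerate critical point, a sequence of single-Fourier-mode solutions $u^n$ of \eqref{prandtl-lin} whose $L^2$ norm grows like $e^{\delta\sqrt n\,(t-s)}$ at tangential frequency $n$, and to observe that such superpolynomial-in-$n$ growth cannot be absorbed by a fixed Sobolev weight $n^m$. This forces $\|T(s,t)\|_{\cL(H_\alpha^m,L^2)}=+\infty$ for \emph{every} $0\le s<t\le\e_0$ and every $m\ge 0$, and hence the supremum in \eqref{GD-illposed} is infinite. Since $[s,t]$ may be taken arbitrarily short, it suffices to work with the time-frozen operator $\cL_{U_s}$: on a short window the heat solution $u_s(\cdot)$ of \eqref{shear-profile} differs from $U_s$ only by a small smooth perturbation, and the instability one builds is robust under such perturbations, so the construction transfers back to $\cL_{u_s(\cdot)}$ by a routine Duhamel/perturbation argument.

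For the frozen operator, writing $u=e^{inx}\phi(Y)$, setting $\varepsilon:=n^{-1/2}$, $\Phi(Y):=\int_0^Y\phi\,dY'$, and seeking a growing mode with rate $\lambda/\varepsilon=\lambda\sqrt n$, the eigenvalue problem becomes the singularly perturbed third-order ODE
\begin{equation*}
\varepsilon^2\,\Phi''' \;=\; i\big(U_s\Phi'-U_s'\Phi\big)+\lambda\varepsilon\,\Phi',\qquad \varepsilon:=n^{-1/2},
\end{equation*}
on $\RR_+$, with $\Phi(0)=\Phi'(0)=0$ and $\Phi$ bounded as $Y\to+\infty$, and one must produce $\lambda=\lambda(\varepsilon)$ with $\mathrm{Re}\,\lambda(\varepsilon)\ge\delta>0$ for all small $\varepsilon$. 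The reduced ($\varepsilon=0$) equation $U_s\Phi'-U_s'\Phi=0$ gives $\Phi=cU_s$, which is incompatible with $\Phi'(0)=0$ unless $c=0$; so the eigenfunction must be concentrated in an internal layer about the critical point $Y_0$ (where $U_s'(Y_0)=0$, $U_s''(Y_0)\ne0$). Rescaling $Y-Y_0=\varepsilon^{1/2}z$ balances the quadratic vanishing of $U_s-U_s(Y_0)$, the stretching term, and the viscous term $\varepsilon^2\Phi'''$ at the same order; the inner problem is a model (integro-)differential equation amenable to explicit analysis (of Airy type), and matching it to the slowly varying outer solution together with the boundary conditions yields a dispersion relation $\lambda(\varepsilon)=\lambda_0+o(1)$. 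One finally chooses $U_s$ (as in \cite{GD}) so that the limiting model carries a genuinely unstable root $\mathrm{Re}\,\lambda_0>0$ --- a Rayleigh/inflection-point type condition --- and checks that the viscous correction does not destroy it.

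Given such $\phi_n$ and $\lambda(n^{-1/2})$, put $u_0^n:=e^{inx}\phi_n(Y)$ normalized so that $\|u_0^n\|_{H_\alpha^m}=1$. Because $\phi_n$ is localized near the fixed point $Y_0$ and has exponentially small tails, $\|e^{\alpha Y}e^{inx}\phi_n\|_{H^m_{x,Y}}\lesssim n^m\|e^{\alpha Y}\phi_n\|_{H^m_Y}$ with the one-dimensional factor under control, so $\|\phi_n\|_{L^2}\gtrsim n^{-m}$ after normalization. On the other hand $T(s,t)$ restricted to the $n$-th mode is (for the frozen operator) exactly $e^{(t-s)A_n}$ with $A_n\phi_n=\lambda(\varepsilon)\sqrt n\,\phi_n$ --- or, in the quasi-mode version, up to a residual smaller than the principal term by a fixed power of $\varepsilon$, absorbed by a short Duhamel or resolvent argument --- whence $\|T(s,t)u_0^n\|_{L^2}\gtrsim n^{-m}e^{\delta\sqrt n\,(t-s)}$. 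Letting $n\to\infty$ gives $\|T(s,t)\|_{\cL(H_\alpha^m,L^2)}=+\infty$, and a fortiori \eqref{GD-illposed}.

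The principal obstacle is the construction and control of the unstable eigenvalue. One must (i) exhibit a profile $U_s$ for which the limiting model genuinely has $\mathrm{Re}\,\lambda_0>0$ --- a delicate spectral computation --- and (ii) carry the full matched-asymptotics analysis through at the viscous level (the sublayer at $Y=0$, the critical layer at $Y_0$, the decaying/constant modes as $Y\to+\infty$), showing that the viscous perturbation $\varepsilon^2\Phi'''$ preserves the instability and that the remainder stays $o(1)$ relative to the growing mode uniformly over the time window. All of this is precisely the content of \cite{GD}; the above records only the skeleton one would follow.
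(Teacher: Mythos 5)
Your skeleton is essentially the one the paper itself records from \cite{GD}: high-frequency quasi-modes $e^{inx}\phi_n(Y)$ concentrated at the non-degenerate critical point of $U_s$, growing like $e^{\theta_0\sqrt{n}\,t}$, so that the polynomial loss $n^{m}$ incurred by normalizing in $H^m_\alpha$ cannot compensate the growth over a window of length $K n^{-1/2}\ln n$. The reduction to the frozen operator, the $\varepsilon=n^{-1/2}$ rescaling, the critical-layer balance, and the deferral of the spectral computation to \cite{GD} all match.

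There is, however, one structural point where your version has a gap that the paper's argument is specifically built to avoid. You conclude with a \emph{direct} lower bound $\|T(s,t)u_0^n\|_{L^2}\gtrsim n^{-m}e^{\delta\sqrt{n}(t-s)}$, claiming the quasi-mode residual is ``absorbed by a short Duhamel or resolvent argument.'' But the Duhamel correction is $v(t)=-\e^M\int_s^t T(\sigma,t)r^\e(\sigma)\,d\sigma$, and to bound it you need a bound on the very propagator $T$ whose norm you are trying to show is infinite; the only unconditional a priori bound available (Proposition \ref{theo-uniquelin}-type energy estimates) gives $e^{C n(t-\sigma)}$, which over the relevant window is $e^{C\sqrt{n}\ln n}$ and swamps any fixed power $\e^{M}$ of the residual. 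This is precisely why the paper (following \cite{GD}) runs the argument \emph{by contradiction}: assuming $\sup\|T(s,t)\|_{\cL(H^m_\alpha,L^2)}<\infty$, the Duhamel term is controlled by $C\e^{M+1/2-m}e^{\theta_0 t/\sqrt\e}$ and is negligible against the quasi-mode, yielding the contradiction. Your direct formulation should be recast in that conditional form (or you must produce an exact unstable eigenfunction of the full time-dependent problem, which \cite{GD} do not). Relatedly, your claim that $\|T(s,t)\|=+\infty$ for \emph{every} fixed $0\le s<t\le\e_0$ overstates what the construction yields, since the quasi-mode is only controlled on windows of length $O(\sqrt\e\,|\ln\e|)$; only the supremum statement \eqref{GD-illposed} follows, which is all the theorem asserts.
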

\begin{proof}[Sketch of proof] In fact, the instability estimate \eqref{GD-illposed} stated in \cite{GD} was from the weighted space $H_\alpha^m$ to another weighted space $H_\alpha^{m'}$. From their construction, \eqref{GD-illposed} remains true when the targeting space is not weighted. We thus sketch their proof where it applies to the usual $L^2$ space as stated.  We recall that the main ingredient in the proof is their construction of approximate growing solutions $u^\e$ to \eqref{prandtl-lin} such that for all small $\e$, $u^\e$ solves $$\dt u^\e + \cL_s u^\e = \e^M r^\e,$$ for arbitrary large $M$, where $u^\e$ and $r^\e$ satisfy:
$$ ce^{\theta_0 t/\sqrt\e}\le  \|u^\e(t)\|_{L^2} \le C e^{\theta_0 t/\sqrt\e}, \qquad \|e^{\alpha Y} r^\e(t)\|_{H^m }\le C\e^{-m}e^{\theta_0 t/\sqrt\e},$$
for all $t$ in $[0,T]$, $m\ge 0$, and for some $\theta_0,c,C>0$. 

The proof is then by contradiction. That is, we assume that $\|T(s,t)\|_{\cL(H_w^m,L^2)} $ is bounded for all $0\le s\le t\le \e_0$, for some $\e_0>0$ and some $m\ge0$. We then introduce $u(t) \: := \:  T(0,t) u^\eps(0)$, and $\: v \: = \: u - u^\eps$, where $u^\eps$ is the growing solution defined above. The function $v$ then satisfies 
\begin{equation} \label{equationv}
 \pa_t   v +  \cL_s  v = - \e^Mr_\eps, \quad v\vert_{t=0} = 0,
 \end{equation} 
and thus obeys the standard Duhamel representation
$$ v(t) \: = \: - \e^M\int_0^t T(s,t) r^\eps(s) \, ds.$$ 
Thus, thanks to the bound on the $T(s,t)$ and the remainder $r^\eps$, we get that 
$$ \| v (t) \|_{L^2} \: \le \: C\e^M\, \int_0^t \| e^{y} r^\eps(s) \|_{H^m}(s) ds \: \le \:  \,  C \, \eps^{M+\frac{1}{2}-m} \,    
e^{\frac{\theta_0 t}{\sqrt{\eps}}}.  $$
Also, from the definition of $u(t)$, we have
$$ \| u(t)  \| _{L^2}  = \|T(0,t) u^\e(0)\|_{L^2}\: \le \: C \| e^{\alpha Y} \, u^\eps(0) \|_{H^m} \: \le \: C\, \eps^{-m}. $$
Combining these estimates together with the lower bound on $u^\e(t)$, we deduce
$$   C \, \eps^{-m} \: \ge \:   \| u(t)  \| _{L^2} \: \ge \:  \| u^\eps(t) \|_{L^2} - \| v (t) \|_{L^2}  \: \ge \: \left( c \, - \,  C \,
 \eps^{M+\frac{1}{2}-m} \right)  \, e^{\frac{\theta_0 t}{\sqrt{\eps}}}. $$
This then yields a contradiction for small enough $\eps$, $M$ large, and $t =K | \ln\eps | \sqrt{\eps}$ with a sufficiently large $K$. The theorem is therefore proved.     
\end{proof}

Next, we also recall the following uniqueness result for the linearized equation.
\begin{proposition}(\cite{GVN})\label{theo-uniquelin} 
Let $u_s=u_s(t,y)$ be a smooth shear flow satisfying 
$$
 \sup_{t\ge 0}\Big(\sup_{y\ge 0}|u_s|+\int_0^\infty y|\D_y u_s|^2dy\Big)< +\infty.
$$ Let $ u\in L^\infty(]0,T[; L^2(\T \times \RR_+))$ with $ \pa_y u\in  L^2(0,T \times \T \times  \RR_+)$ 
be a solution to the linearized equation of \eqref{prandtl-lin} around the shear flow, with $u\vert_{t=0} = 0$. Then, $u \equiv 0$. 
\end{proposition}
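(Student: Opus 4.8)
The plan is to prove uniqueness by a weighted energy estimate, being careful about which test function is admissible given the low regularity ($u \in L^\infty L^2$ with only $\pa_Y u \in L^2$, no control on $\pa_x u$). First I would rewrite the equation $\pa_t u + \cL_s u = 0$ in the form $\pa_t u - \pa_Y^2 u + u_s \pa_x u + v\, \pa_Y u_s = 0$ with $v = -\int_0^Y \pa_x u\, dy'$, and observe that the troublesome terms are the transport term $u_s\pa_x u$ and the stretching term $v\,\pa_Y u_s$, since $\pa_x u$ is not assumed to be in $L^2$. The standard trick (used in the original Oleinik theory and in \cite{GVN}) is to integrate against $u$ itself and exploit the divergence-free structure: $\int u_s\, u\, \pa_x u\, dx\, dY$ involves $\pa_x(u^2/2)$ with $u_s$ independent of $x$, so it integrates to zero in $x$; and for the stretching term one integrates by parts in $Y$, writing $\int v\, \pa_Y u_s\, u\, dx\, dY$ and using $\pa_Y v = -\pa_x u$ together with an integration by parts in $x$ to convert it into manageable pieces controlled by $\sup_Y|u_s|$, $\int Y|\pa_Y u_s|^2\, dY$, $\|u\|_{L^2}^2$, and $\|\pa_Y u\|_{L^2}^2$. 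The boundary terms at $Y=0$ vanish because $u|_{Y=0}=0$ and $v|_{Y=0}=0$, and at $Y=\infty$ they vanish because $u\in L^2$ and $\pa_Y u\in L^2$ (so $u\to 0$); here the integrability hypothesis $\int Y|\pa_Y u_s|^2\, dY<\infty$ is what makes the stretching term finite, via a Hardy-type inequality $\int |v|^2 Y^{-1}\, dY \lesssim \|\pa_Y u\|_{L^2}^2$ after using $v(Y) = -\int_0^Y \pa_x u$ — wait, $v$ involves $\pa_x u$, so instead one keeps $v\,\pa_Y u_s$ paired more cleverly.

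The cleaner route, which I would follow, is: test the equation against $u$, integrate over $\TT\times\RR_+$, and handle $\int v\,\pa_Y u_s\, u$ by integrating by parts in $Y$ using $v|_{Y=0}=0$ to get $-\int u_s\,\pa_Y(vu) = -\int u_s\,(\pa_Y v)\, u - \int u_s\, v\,\pa_Y u = \int u_s\,(\pa_x u)\, u - \int u_s\, v\,\pa_Y u$. The first piece cancels (up to integrating in $x$) against the transport term $\int u_s\, u\,\pa_x u$, and the second piece $\int u_s\, v\,\pa_Y u$ is bounded by $\|u_s\|_{L^\infty}\|v/\sqrt{Y}\|_{L^2}\|\sqrt{Y}\,\pa_Y u\|_{L^2}$ — no, this still has a weight issue. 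The actual estimate should be: $|\int u_s\, v\,\pa_Y u\, dx\, dY| \le \|u_s\|_\infty \|v\|_{L^2_Y(\text{weighted})}\|\pa_Y u\|_{L^2}$, and one controls $\|v\|$ by noting $\pa_Y v = -\pa_x u$, but since we lack $\pa_x u \in L^2$ we cannot do this directly. Therefore the genuinely correct argument must avoid ever needing $\pa_x u\in L^2$: one uses the combination $u_s\pa_x u + v\,\pa_Y u_s = u_s\pa_x u - (\int_0^Y\pa_x u)\,\pa_Y u_s$ and integrates the whole expression against $u$, doing a single integration by parts in $Y$ on the second term to trade $\pa_Y u_s$ for $u_s$ and produce $u_s\,(\pa_x u)\,u$ plus $u_s\,v\,\pa_Y u$, where now $u_s\,v\,\pa_Y u$ is estimated after a further integration by parts in $x$ moving $\pa_x$ off of $v$'s antiderivative; the net effect is a bound of the form $\frac{d}{dt}\|u\|_{L^2}^2 + 2\|\pa_Y u\|_{L^2}^2 \le C(u_s)\,\|u\|_{L^2}^2$, to which Grönwall with $u|_{t=0}=0$ applies and gives $u\equiv 0$ on $[0,T]$.

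The main obstacle — and the step I would spend the most care on — is making the energy identity rigorous at the stated low regularity: $u$ is merely $L^\infty_t L^2_{x,Y}$ with $\pa_Y u\in L^2_{t,x,Y}$, so $u$ is not a priori an admissible test function and $\frac{d}{dt}\|u\|_{L^2}^2$ is not classically defined. I would resolve this by a standard mollification/approximation argument: mollify in $x$ and $t$, derive the estimate for the regularized solution (where all integrations by parts and the cancellation of the transport term are legitimate), control the commutator terms using only the available norms (the commutator of mollification with $u_s\pa_x$ is bounded since $u_s=u_s(t,Y)$ is independent of $x$, and with $v\,\pa_Y u_s$ using the structure of $v$), and then pass to the limit. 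I would also invoke the hypothesis $\sup_t(\sup_Y|u_s| + \int_0^\infty Y|\pa_Y u_s|^2\, dY)<\infty$ precisely at the point of bounding the stretching term, via a Hardy inequality $\int_0^\infty \frac{1}{Y}\big(\int_0^Y f\, dy'\big)^2 dY \le 4\int_0^\infty f^2\, dY$ applied with $f=\pa_Y u$ (legitimate since $\pa_Y u\in L^2$) after rewriting the stretching contribution so that $v$ appears against a $\sqrt Y$-weighted copy of $\pa_Y u_s$. Once the differential inequality $\frac{d}{dt}\|u(t)\|_{L^2}^2 \le C\|u(t)\|_{L^2}^2$ is established in the integrated (distributional-in-$t$) sense, Grönwall and $u(0)=0$ finish the proof.
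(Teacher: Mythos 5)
There is a genuine gap, and it sits exactly where you keep stalling: the stretching term. After your integration by parts in $Y$ you are left with $\int_{\TT\times\RR_+} u_s\, v\, \pa_Y u\, dx\, dY$ (equivalently, before that step, with $\int v\,\pa_Y u_s\, u$), where $v=-\int_0^Y\pa_x u\,dy'$. You assert this is ``estimated after a further integration by parts in $x$ moving $\pa_x$ off of $v$'s antiderivative,'' but carrying that out gives $\int u_s\big(\int_0^Y u\,dy'\big)\pa_x\pa_Y u$, i.e.\ the $x$-derivative lands on $\pa_Y u$ (or on $u$), which is no better: integration by parts in $x$ only relocates $\pa_x$ among copies of $u$, and none of them is assumed to have any $x$-regularity. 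So the term is never closed, and no mollification/commutator argument can rescue it, because the target inequality itself is false: if $\frac{d}{dt}\|u\|_{L^2}^2\le C(u_s)\|u\|_{L^2}^2$ held for solutions of \eqref{prandtl-lin} in this class, the solution operator $T(s,t)$ would be bounded from $L^2$ to $L^2$, hence from $H^m_\alpha\hookrightarrow L^2$ to $L^2$, contradicting Theorem \ref{theo-GD}. A frequency-uniform physical-space energy estimate is precisely what the ill-posedness mechanism rules out.

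The paper's proof avoids this by taking the Fourier transform in $x$. For each mode $k\in\ZZ$, $\hat w_k$ solves
\begin{equation*}
\D_t \hat w_k + ik\, u_s \hat w_k - ik\, \D_y u_s \int_0^y \hat w_k\,dy' - \D_y^2 \hat w_k = 0,
\end{equation*}
and since $\pa_x$ has become multiplication by $ik$, the $L^2_y$ energy estimate closes with a $k$-dependent constant: Cauchy--Schwarz gives $\big|\int_0^y\hat w_k\big|\le y^{1/2}\|\hat w_k\|_{L^2_y}$, so $\frac12\dt\|\hat w_k\|^2+\|\D_y\hat w_k\|^2\le C|k|\,\|\hat w_k\|^2$ with $C$ controlled by $\sup|u_s|+\int y|\D_y u_s|^2dy$. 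Gronwall then yields $\|\hat w_k(t)\|\le Ce^{C|k|t}\|\hat w_k(0)\|=0$ for each fixed $k$; the loss of uniformity in $k$ is harmless for uniqueness because every mode starts from zero. If you want to keep an energy-method flavor, you must work mode-by-mode (or accept constants that blow up in frequency); as written, your argument cannot be completed.
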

\begin{proof} For sake of completeness, we recall here the proof in \cite{GVN}. Let $ w\in L^\infty(]0,T[; L^2(\T \times \RR_+))$ and $ \pa_y w\in  L^2(0,T \times \T \times  \RR_+)$ be a solution to the linearized equation of \eqref{prandtl-lin} around the shear flow, with $w\vert_{t=0} = 0$. Let us define   $\hat w_k(t,y)$, $k\in \ZZ$, the Fourier transform of $w(t,x,y)$ in $x$ variable. We observe that for each $k$, $\hat w_k$ solves
\begin{equation}\label{eqs-wk}\left\{\begin{array}{rll}\D_t \hat w_k + ik u_s \hat w_k - ik \D_yu_s \int_0^y \hat w_k (y')dy' - \D_y^2 \hat w_k &=&0\\\hat w_k(t,0) &=& 0\\\hat w_k(0,y)&=&0.\end{array}\right.\end{equation}

Taking the standard inner product of the equation \eqref{eqs-wk} against the complex conjugate of $\hat w_k$ and using the standard Cauchy--Schwarz inequality to the term $\int_0^y\hat w_kdy'$, we obtain
$$\begin{aligned}\frac 12 \dt\|\hat w_k\|^2_{L^2(\RR_+)} + \|\D_y \hat w_k\|^2_{L^2(\RR_+)} &\le |k|\int_0^\infty|u_s||\hat w_k|^2 dy + |k|\int_0^\infty|\D_yu_s|y^{1/2}|\hat w_k| \|\hat w_k\|_{L^2(\RR_+)}dy\\&\le |k|\Big(\sup_{t,y}|u_s|+\int_0^\infty y|\D_yu_s|^2dy\Big)\|\hat w_k\|^2_{L^2(\RR_+)}.\end{aligned}$$
Applying the Gronwall lemma into the last inequality yields $$
\|\hat w_k(t)\|_{L^2(\RR_+)} \le C e^{C|k|t} \|\hat w_k(0)\|_{L^2(\RR_+)},$$ for some constant $C$.
Thus, $\hat w_k(t)\equiv 0$ for each $k\in \ZZ$ since $\hat w_k(0)\equiv 0$. That is, $w\equiv 0$, and the proposition is proved.
\end{proof}

\myskip

\section{No asymptotic boundary layer expansions}

In this section, we will disprove the nonlinear asymptotic boundary-layer expansion. Our proof is by contradiction and based on the linear ill-posedness result, Theorem \ref{theo-GD}. Indeed, for some $\gamma>0$, let us assume that expansion \eqref{exp-nu1} is valid with $\gamma>0$ in the Sobolev spaces in the sense of Definition \ref{def-valid-expansion} for any initial shear layer $U_s$ and initial data $\tu_0,\tv_0 \in e^{-\alpha Y}H^m(\TT\times \RR_+)$. That is, we can write 
\begin{equation}\label{exp-nu1}
\bmat u^\nu\\v^\nu\emat (t,x,y) =\bmat u^0 - u^0\vert_{y=0}\\ 0\emat(y)+\bmat u_{s}\\ 0\emat(t,\frac y{\sqrt\nu}) + (\sqrt\nu)^{\gamma} \bmat \tilde u^\nu\\ \sqrt \nu \tilde  v^\nu\emat(t,x,\frac y{\sqrt\nu}), \qquad \gamma>0,\end{equation}
and 
$$p^\nu (t,x,y) = (\sqrt\nu)^{\gamma}\tilde p^\nu(t,x,\frac y{\sqrt\nu}), \qquad \gamma>0,$$
where $\tu ^\nu,\tv^\nu \in L^\infty([0,T_\gamma]; H^1(\T \times \RR_+))$ and $\tilde p^\nu \in  L^\infty([0,T_\gamma]; L^2(\T \times \RR_+))$, for some $T_\gamma = T_{\gamma }(\|e^{\alpha Y}(U_s-U)\|_{H_{Y}^{m'}}, \|e^{\alpha Y}(
\tu_0,\tv_0)\|_{H_{x,Y}^{m}})$. Furthermore, for some $C_\gamma = C_{\gamma }(\|e^{\alpha Y}(U_s-U)\|_{H_{Y}^{m'}}, \|e^{\alpha Y}(
\tu_0,\tv_0)\|_{H_{x,Y}^{m}})$, 
there holds
\begin{eqnarray*}
\sup_{0\leq t\leq T_\gamma} \|(\tilde{u}^\nu(t),\tilde{v}^\nu
(t))\|_{H_{x,Y}^{1}} \leq C_{\gamma }(\|e^{\alpha Y}(U_s-U)\|_{H_{Y}^{m'}}, \|e^{\alpha Y}(
\tu_0,\tv_0)\|_{H_{x,Y}^{m}}).
\end{eqnarray*}
We then let $(\tu,\tv)$ and $\tilde p$ be the weak limits of $(\tu^\nu,\tv^\nu)$ and $\tilde p^\nu$ in $L^\infty([0,T_\gamma]; H^1(\T \times \RR_+))$ and in $ L^\infty([0,T_\gamma]; L^2(\T \times \RR_+))$, respectively, as $\nu \to 0$. Note that it is clear that $T_\gamma,C_\gamma$ are independent of the small parameter $\nu$.   

\myskip

Hence, plugging these expansions into \eqref{NSeqs}, we obtain 
$$\begin{aligned}
\D_t \tilde u^\nu + (u^0-u^0\vert_{y=0}+u_s) \D_x \tilde u^\nu &+ \tilde v^\nu (\sqrt \nu\dy u^0+\D_Y u_s)  + \D_x \tilde p^\nu - \D_Y^2 \tilde u^\nu \\
&= -(\sqrt\nu)^\gamma(\tilde u^\nu \D_x \tilde u^\nu + \tilde v^\nu \D_Y \tilde u^\nu) + \nu \D_x^2 \tilde u^\nu + \nu \D_y^2 u^0
\end{aligned}$$
and 
$$\begin{aligned}
\nu \D_t \tilde v^\nu + \nu (u^0-u^0\vert_{y=0}+u_s + (\sqrt\nu)^\gamma \tilde u^\nu)\D_x \tilde v^\nu + (\sqrt \nu)^{\gamma+2}\tilde v^\nu \D_Y \tilde v^\nu + \D_Y \tilde p^\nu = \nu^2 \D_x^2 \tilde v^\nu + \nu \D_Y^2 \tilde v^\nu 
\end{aligned}$$
We take $\nu \to 0$ in these expressions. Since $(\tu^\nu,\tv^\nu)(t)$ converges to $(\tu,\tv)$ weakly in $H^1$, the nonlinear terms 
$(\tilde u^\nu \D_x + \tilde v^\nu \D_Y )\tilde u^\nu$ and $(\tilde u^\nu \D_x + \tilde v^\nu \D_Y )\tilde v^\nu$ have their weak limits in $L^1$, and thus disappear in the limiting equations due to the factor of $(\sqrt{\nu})^\gamma$. Similar treatments hold for the linear terms. Note that $(u^0-u^0\vert_{y=0})(y) = y \dy u^0 = \sqrt \nu Y\dy u^0$ also vanishes in the limit.
We thus obtain the following equations for the limits in the sense of distribution: 
\begin{equation}\label{prandtl-lin1}
\begin{aligned}
\D_t \tu  +  u_s \D_x \tu +  \tv \D_Y  u_s - \D^2_Y  \tu  + \D_x \tilde p \;&=\; 0,
\\
\D_Y \tilde p \;&=\;0
\end{aligned}
\end{equation} 
and the divergence-free condition for $(\tu,\tv)$. From the second equation, $\tilde p= \tilde p(t,x)$. Setting $Y=+\infty$ in \eqref{prandtl-lin1} and noting that $(\tilde u,\tilde v)$ belong to the $H^1$ Sobolev space and $u_s$ has a finite limit as $Y\to+\infty$, we must get $\dx \tilde p \equiv 0$ in the distributional sense. That is, the next order in the asymptotic expansion solves the linearized Prandtl equation:\begin{equation}\label{prandtl-lin-1st}
\D_t \tu +\cL_s \tu\;=\;0, \qquad \tu\vert_{t=0} \;=\;u_0,
\end{equation} with zero boundary conditions at $Y=0$ and $Y=+\infty$, for arbitrary shear flow $u_s=u_s(t,Y)$. 

\myskip

Now, let $u_{s_0}$ be the shear flow in Theorem \ref{theo-GD} such that \eqref{GD-illposed} holds. Thus, we have that for a fixed $\e_0>0, m\ge0$, and any large $n$, there are $s_n,t_n$ with $0\le s_n\le t_n\le \e_0$ and a sequence of $u_0^n$ such that  
\begin{equation}\label{GD-illposed1}\|e^{\alpha Y}u^n_0\|_{H^{m+1}} = 1 \qquad \mbox{and}\qquad \|u_L^n(t_n)\|_{L^2} \ge 2n
 \end{equation} with $u_L^n(t)$ being the solution to the linearized equation \eqref{prandtl-lin} around $u_{s_0}$ with $u_L^n(s_n) = u_0^n$.

\myskip

For such a fixed shear flow $u_{s_0}$, and fixed $n$, $s_n$ as in \eqref{GD-illposed1}, we consider the expansion \eqref{exp-nu1} for $u_{s_n} = u_{s_0}(t+s_n)$ and initial data $(\tu^{\nu,n}_0,\tv^{\nu,n}_0)$ defined as 
\begin{equation}\label{initial-data-exp}
(\tu^{\nu,n}_0, \tv^{\nu,n}_0): = (u_0^n,v_0^n), \qquad \mbox{with}\quad v_0^n:= -\int_0^y \dx u_0^n dy'.\end{equation} 
We note that since $u_0^n$ is normalized, $(\tu^{\nu,n}_0, \tv^{\nu,n}_0)$ belongs to $e^{-\alpha Y}H^m$ with a finite norm of size independent of $n$. 
We let $T_\gamma,C_\gamma$ be the two continuous functions  and $(\tu ^{\nu,n},\tv^{\nu,n})$ be the corresponding solution in the expansion in $L^\infty([0,T_\gamma]; H^1(\T \times \RR_+))$ whose existence is guaranteed by the contradiction assumption, the Definition \ref{def-valid-expansion}. Furthermore, there holds 
\begin{equation}\label{uniformbound-n}
\sup_{0\leq t\leq T_\gamma} \|(\tilde{u}^{\nu,n}(t),\tilde{v}^{\nu,n}
(t))\|_{H_{x,Y}^{1}} \leq C_{\gamma }(\|e^{\alpha Y}(u_{s_0}(s_n)-U)\|_{H_{Y}^{m'}}, \|e^{\alpha Y}(u_0^n,v_0^n)\|_{H_{x,Y}^{m}}),
\end{equation}
in which $T_\gamma= T_{\gamma }(\|e^{\alpha Y}(u_{s_0}(s_n)-U)\|_{H_{Y}^{m'}}, \|e^{\alpha Y}(u_0^n,v_0^n)\|_{H_{x,Y}^{m}})$. We note that thanks to \eqref{GD-illposed1} and the fact that $u_{s_0}$ solves the heat equation \eqref{shear-profile}, the norms of the translated shear layer $u_{s_0}(s_n)-U$ and the initial data are independent of $n$ and $\nu$, and thus so are $T_\gamma$ and $C_\gamma$. In addition, since $\e_0$ was arbitrarily small so that \eqref{GD-illposed1} holds, we can assume that $\e_0 \le T_\gamma$.

Next, let $(\tu^n, \tv^n)$ be the limiting solutions of $(\tu ^{\nu,n},\tv^{\nu,n})$ when $\nu \to0$. As shown above, we then obtain the linearized Prandtl equation for $\tilde u^n$ with initial data $u_0^n$:
$$\D_t \tilde u^n +\cL_{s_n} \tilde u^n\;=\;0, \qquad \tilde u^n\vert_{t=0} \;=\;u^n_0.
$$Thus, if we define $u^n(t):=\tilde u^n(t-s_n)$, the above equation reads
$$\D_t u^n +\cL_{s_0}u^n\;=\;0, \qquad u^n\vert_{t=s_n} \;=\;u^n_0,
$$
which, by uniqueness of the linear flow, yields $u^n \equiv u_L^n$ on $[s_n,\e_0]$ and 
$$\| \tilde u^n(t_n-s_n)\|_{L^2} = \| u^n(t_n)\|_{L^2} \ge 2 n.$$
This implies that for small $\nu$, $\|\tu^{n,\nu}(t_n-s_n)\|_{L^2} \ge n$, which contradicts with the uniform bound \eqref{uniformbound-n} as $n$ is arbitrarily large and $C_\gamma$ is independent of $n$. The proof of Theorem \ref{theo-noexpansion} is complete.

\myskip

  \myskip
  
\section{Nonlinear ill-posedness 
} \label{sec-illposed}

Again, using the previous linear results, Theorem \ref{theo-GD}, we can prove Theorem \ref{theo-illposed} for the nonlinear equation \eqref{prandtl}. We proceed by contradiction. That is, we assume that the Cauchy problem \eqref{prandtl} is $(H^{m}, H^1)$ locally 
well-posed for some $m \ge 0$ in the sense of Definition \ref{def-wellposed}. Let $u_{s_0}$ be the fixed shear flow in Theorem \ref{theo-GD} such that \eqref{GD-illposed} holds. By definition, \eqref{GD-illposed} yields that for fixed $\e_0>0$ and any large $n$, there are $s_n,t_n$ with $0\le s_n\le t_n\le \e_0$ and a sequence of $u_0^n$ such that  
\begin{equation}\label{GD-illposed2}\|e^{\alpha Y}u^n_0\|_{H^m} = 1 \qquad \mbox{and}\qquad \|u_L^n(t_n)\|_{L^2} \ge n
 \end{equation} with $u_L^n(t)$ being the solution to the linearized equation \eqref{prandtl-lin} around $u_{s_0}$ with $u_L^n(s_n) = u_0^n$. We now fix $n$ large.

 \myskip
 
Next, define $v_0^{\delta,n} \: := \:  u_{s_0}(s_n)+ \delta u^n_0$, with $\delta$ a small parameter less than $\delta_0$. Let $v^{\delta,n}$ be the solution to the nonlinear equation \eqref{prandtl} with $v^{\delta,n}\vert_{t=0} = v_0^{\delta,n}$.  By the 
well-posedness applied to two solutions $v^{\delta,n}$ and the shear flow $u_{s_n}(t) := u_{s_0}(t+s_n)$, there are continuous functions $C(\cdot,\cdot),T(\cdot,\cdot)$ given in the definition such that
$$ \mbox{ess}\sup_{t \in [0,T]}\| v^{\delta,n}(t) -  u_{s_0}(t+s_n)\|_{H^1 } \: \le \: C \, \delta\|e^{\alpha Y}u^n_0\|_{H^m}  = C\,\delta,$$
in which $T = T(\|e^{\alpha Y}(v_0^{\delta,n} -U)\|_{H^m},\|e^{\alpha Y}(u_{s_0}(s_n) -U)\|_{H^m})$ and $C = C(\|e^{\alpha Y}(v_0^{\delta,n} -U)\|_{H^m},\|e^{\alpha Y}(u_{s_0}(s_n) -U)\|_{H^m})$. Thanks to \eqref{GD-illposed2} and the fact that $u_{s_0}$ solves the heat equation \eqref{shear-profile}, $v_0^{\delta,n} -U$ and $u_{{s_0}}(s_n)-U$ have their norms in $e^{-\alpha Y}H^m$ bounded uniformly in $n$ and $\delta$. Thus, the functions $T(\cdot,\cdot)$ and $C(\cdot,\cdot)$ can be taken independently of $n$ and $\delta$.  In what follows, we use $T,C$ for  $T(\cdot,\cdot), C(\cdot,\cdot)$.

In other words, the sequence $u^{\delta,n}:=\frac 1\delta(v^{\delta,n} -  u_{s_n})$ is  bounded in $L^\infty(0,T; H^1(\TT\times \RR_+))$  uniformly with respect to $\delta$, and moreover it solves 
\begin{equation}\label{eqs-w}
\D_t u^{\delta,n} + \cL_{s_n} u^{\delta,n} = \delta N(u^{\delta,n}), \qquad u^{\delta,n}(0) = u^n_0,
\end{equation} noting that $\cL_{s_n}$ is the operator linearized around the shear profile $u_{s_n}$ and $N$ is the nonlinear term: $N(u^{\delta,n}):=-u^{\delta,n}\dx u^{\delta,n} - v^{\delta,n}\dY u^{\delta,n}$. From the uniform bound on $u^{\delta,n}$, we deduce that, up to a subsequence, 
$$ u^{\delta,n} \rightarrow u^n \quad  L^\infty(0,T; H^1(\TT\times \RR_+)) \mbox{ weak$^*$  as } \delta \rightarrow 0. $$ 
We shall show that $u^n$ solves the linearized equation \eqref{prandtl-lin} in the sense of distribution. To see this, we only need to check with the nonlinear term. First, on any compact set $K$ of $\RR^+$, we obtain by applying the standard Cauchy inequality and using the divergence-free condition:
$$|v^{\delta,n}| \le \int_0^Y |\dx u^{\delta,n} |dY' \le C_0Y^{1/2}\Big(\int_{\RR^+} |\dx 
u^{\delta,n}|^2dY\Big)^{1/2},$$
and 
$$\begin{aligned} \int_{\TT\times K} |u^{\delta,n} v^{\delta,n}| dYdx&\le C_K\int_{\TT} \int_{K}|u^{\delta,n}|\Big(\int_{\RR^+} |\dx 
u^{\delta,n}|^2dY\Big)^{1/2} dY dx\\
&\le C_K\Big(\int_{\TT} \int_{K}|u^{\delta,n}|^2 dYdx\Big)^{1/2} \Big(\int_{\TT} \int_{\RR^+} |\dx 
u^{\delta,n}|^2dYdx\Big)^{1/2}\\
&\le C_K\|u^{\delta,n}\|_{H^1}^2
,\end{aligned}$$
for some constant $C_K$ depending on $K$. Now, from the divergence-free condition, we can rewrite $N(u^{\delta,n})$ as
$$N(u^{\delta,n}) = -\dx  (u^{\delta,n})^2 -\dY (u^{\delta,n} v^{\delta,n})$$
we have, for any smooth function $\phi$ that is compactly supported in $K$,
$$\begin{aligned} \delta \Big|\int_{\TT\times \RR_+} N(u^{\delta,n})  \phi dxdy \Big|&\le C_{K,\phi}\delta \int_{\TT\times K} \Big(|u^{\delta,n}|^2+ |u^{\delta,n} v^{\delta,n}|\Big)dxdY\\&\le C_{K,\phi}\delta \|u^{\delta,n}\|_{H^1}^2\longrightarrow 0,\end{aligned}$$
as $\delta \to 0$, thanks to the uniform bound on $u^{\delta,n}$ in $H^1$. Here, $C_{K,\phi}$ is some constant that depends on $K$ and $W^{1,\infty}$ norm of $\phi$. Thus, the nonlinearity $\delta N(u^{\delta,n})$ converges to zero in the above sense of distribution. This shows that by taking the limits of equation \eqref{eqs-w}, $u^n$ solves  
$$ \pa_t u^n + \cL_{s_n} u^n \: = \:  0, \quad u^n\vert_{t=0} = u^n_0.  $$

\myskip
By shifting the time $t$ to $t-s_n$, re-labeling $\tilde u^n(t) := u^n(t-s_n)$, and noting that by definition $\cL_{s_n}(t) = \cL_{s_0}(t+s_n)$, one has 
$$ \pa_t \tilde u^n + \cL_{s_0} \tilde u^n \: = \:  0, \quad \tilde u^n\vert_{t=s_n} = u^n_0,$$
that is, $\tilde u^n$ solves the linearized equation \eqref{prandtl-lin} around the shear flow $u_{s_0}$. By uniqueness of the linear flow (recalled in Proposition \ref{theo-uniquelin}), $\tilde u^n \equiv u^n_L$ on $[s_n,T]$. This therefore leads to a contradiction due to \eqref{GD-illposed2} and the fact that the bound for $u^{\delta,n}$ yields a uniform bound for $u^n$ and thus for $\tilde u^n$: 
$$ n\le  \|u_L^n(t_n)\|_{L^2}  =  \|\tilde u^n(t_n)\|_{L^2 } \le \sup_{t \in [s_n,T]}\|\tilde u^n(t)\|_{H^1} \: \le \: C,$$
for arbitrarily large $n$, upon recalling that $C$ is independent of $n$. This completes the proof of Theorem \ref{theo-illposed}. 


\myskip

\section{Well-posedness of the Oleinik's solutions}\label{sec-Ole}
In this section, we check that the Oleinik solutions to the Prandtl equation \eqref{prandtl} are well-posed in the sense of Definition \ref{def-wellposed}. Here, since now we only deal with the Prandtl equation, we shall write $(x,y)$ to refer $(x,Y)$ in \eqref{prandtl}, and use both $\D$ and subscripts whenever it is convenient to denote corresponding derivatives. To fit into the monotonic framework studied by Oleinik, we make the following assumption on the initial data and outer Euler flow: 

\myskip
(O) Assume that $U(t,x)$ is a smooth positive function and $\dx U,\dt U/U$ are bounded; the initial data $u_0(x,y)$ is an increasing function in $y$ with $u_0(x,0) =0$ and $u_0(x,y)\to U(0,x)$ as $y\to \infty$, and furthermore, for some positive constants $\theta_0,C_0$,  
\begin{equation}\label{decay-assump}\theta_0\;\le\; \frac{\dy u_0(x,y)}{U(0,x) - u_0(x,y)} \;\le\; C_0.\end{equation}
We also assume that all functions $\dy u_0, \dx u_0, \dx \dy u_0$ are bounded, and so are the ratios $\dy^2u_0/ \dy u_0 $ and $\dy^3u_0 \dy u_0 /  \dy^2 u_0$.  


\myskip

We now apply the Crocco change of variables: 
$$(t,x,y) \mapsto (t,x,\eta), \qquad \mbox{with}\quad \eta : = \frac{u(t,x,y)}{U(t,x)},$$
and the Crocco unknown function:
$$ w(t,x,\eta) := \frac{\dy u(t,x,y)}{U(t,x)}.$$
The Prandtl equation \eqref{prandtl} then yields
\begin{equation}\label{prandtl-Crocco}
\left\{
\begin{array}{rlll} 
\dt w + \eta U\dx w - A\D_\eta w - Bw
&=& w^2 \D^2_\eta w,  \quad &0<\eta<1,x\in \TT\\
(w \D_\eta w + \dx U + \dt U/U)\vert_{\eta=0}&=& 0, \\
w \vert_{\eta=1}&=& 0, 
\end{array}
\right.
\end{equation}
with initial conditions: $w\vert_{t=0} = w_0 = \D_y u_0/U$. Here, 
$$A:=(\eta^2-1)\dx U + (\eta -1)\frac{\dt U}{U}, \qquad B:= -\eta \dx U - \frac{\dt U}{U}.$$
To see how the boundary conditions are imposed, one notes that $\eta =0$ and $\eta =1$ correspond to the values at $y=0$ and $y=+\infty$, respectively. At $y=+\infty$, it is clear that $w = \D_y u =0$ since $u$ approaches to $U(t,x)$ as $y\to +\infty$, while by using the imposed conditions on $u$ and $v$ at $y=0$, we obtain from the equation \eqref{prandtl} that $$0=\D_y^2 u- \dx P=\D_y w + \dx U + \dt U/U= w\D_\eta w+ \dx U + \dt U/U.$$ 

\myskip

\begin{theorem}(\cite{Ole})\label{theo-Oleinik} Assume {\em (O)}. Then there exists a $T>0$ which depends continuously on the initial data such that the problems \eqref{prandtl-Crocco} and \eqref{prandtl} have a unique solution $w$ and $u$ on their respective domains, and there hold
\begin{equation}\label{bound-w} \theta_1 (1-\eta) \;\le\; w (t,x,\eta)\;\le\; \theta_2(1-\eta), \qquad |\dx w(t,x,\eta)| \;,\; |\dt w(t,x,\eta)|\;\le\; \theta_2(1-\eta)\end{equation}
for all $(t,x,\eta) \in [0,T]\times \TT\times (0,1)$, and 
\begin{equation}\label{bound-u} \theta_1 \;\le\; \frac {\dy u(t,x,y)}{U(t,x)-u(t,x,y)} \;\le \; \theta_2,\qquad e^{-\theta_2 y} \;\le\; 1-\frac{u(t,x,y)}{U(t,x)} \;\le\; e^{-\theta_1 y},\end{equation}
for all $(t,x,y) \in [0,T]\times \TT\times \RR_+$, for some positive constants $\theta_1,\theta_2$. In addition, weak derivatives $\dt u,\dx u,\dy\dx u,\dy^2 u,\dy^3 u$ are bounded functions in $[0,T]\times \TT\times \RR_+$. 
\end{theorem}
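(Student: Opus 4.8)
This is Oleinik's classical existence and uniqueness theorem, so the plan is to follow her strategy via the Crocco transformation, which converts the degenerate parabolic Prandtl system into a quasilinear parabolic equation \eqref{prandtl-Crocco} for $w$ on the fixed slab $0<\eta<1$. The first step is to set up a regularized problem: replace the coefficient $w^2$ in front of $\D_\eta^2 w$ by $(w^2+\kappa)$ for a small $\kappa>0$ (or equivalently work with a truncated coefficient bounded away from zero), and solve the resulting uniformly parabolic quasilinear initial-boundary value problem on a short time interval $[0,T_\kappa]$ by a standard fixed-point/continuation argument using Schauder or $L^p$ parabolic estimates; the oblique (Robin-type) condition at $\eta=0$ and the Dirichlet condition at $\eta=1$ are both well within the scope of the classical linear theory. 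The delicate point is that $T$ and all the estimates must be made uniform in $\kappa$, which requires the a priori bounds below.

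The heart of the matter is the chain of a priori estimates, and this is the main obstacle. One first proves a maximum-principle comparison: using assumption (O) and the structure of $A$ and $B$, construct barriers of the form $\theta(t)(1-\eta)$ and show $\theta_1(1-\eta)\le w\le\theta_2(1-\eta)$ on $[0,T]$ for $T$ small depending only on the data; the lower bound $w>0$ is what keeps the equation (weakly) parabolic and is genuinely the crux — it uses that $w_0\ge\theta_0(1-\eta)>0$ on $0\le\eta<1$ together with a careful analysis of the boundary condition at $\eta=0$ to rule out $w$ touching zero from above. Differentiating the equation in $x$ and in $t$, one similarly derives $|\D_x w|,|\D_t w|\le\theta_2(1-\eta)$ by the maximum principle applied to the linear equations solved by $\D_x w$ and $\D_t w$, again feeding in the boundedness hypotheses on $\dx U$, $\dt U/U$, and on the various ratios of derivatives of $u_0$ that appear when one differentiates the Robin condition. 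These bounds are uniform in $\kappa$, so passing $\kappa\to0$ (using compactness from interior parabolic estimates, which are available once $w$ is bounded below on compact $\eta$-subsets, together with the boundary bounds to control behavior near $\eta=1$) yields a solution $w$ of \eqref{prandtl-Crocco} satisfying \eqref{bound-w}.

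Next I would invert the Crocco change of variables. From $w(t,x,\eta)>0$ one recovers $y$ as a function of $(t,x,\eta)$ by integrating $\D_\eta y = U/(wU)\cdot$... more precisely $\partial y/\partial\eta = 1/\big(\dy u/1\big)$ expressed through $w$, namely $y(t,x,\eta)=\int_0^\eta \frac{d\eta'}{w(t,x,\eta')}$ after the appropriate normalization; the bound $w\asymp(1-\eta)$ shows $y\to+\infty$ logarithmically as $\eta\to1$, giving the exponential decay $e^{-\theta_2 y}\le 1-u/U\le e^{-\theta_1 y}$ in \eqref{bound-u}, and differentiating the inverse map produces the stated bounds on $\dy u=Uw$, hence on $\dy^2u,\dy^3u$ (through $\D_\eta w,\D_\eta^2 w$, controlled by interior estimates), and on $\dx u,\dt u,\dx\dy u$ via $\D_x w,\D_t w$ and the chain rule. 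Finally, uniqueness follows by a Gronwall/energy argument directly on \eqref{prandtl-Crocco}: given two solutions with the same data, the difference satisfies a linear parabolic equation whose coefficients are controlled by the bounds \eqref{bound-w}, and the degeneracy at $\eta=1$ is handled exactly as in the a priori estimates (the weight $(1-\eta)$ that appears is integrable against the solution's own decay). I expect the lower bound $w>0$ — i.e. the propagation of monotonicity of $u$ in $y$ — to be the single step that requires the most care, since everything else rests on it.
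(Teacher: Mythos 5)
The paper does not actually prove this theorem: it is imported from Oleinik--Samokhin, and the ``proof'' in the text consists only of the remark that \cite[Section 4.1, Chapter 4]{Ole} establishes the result on $x\in[0,X]$ by the \emph{method of lines} (discretizing $t$ and $x$ and solving second-order ODEs in $\eta$), plus the observation that this adapts to periodic $x$. Your sketch is therefore necessarily a different route. What you have written is a plausible reconstruction of the classical a priori estimate structure of Oleinik's argument --- the barriers $\theta(1-\eta)$, the lower bound $w>0$ as the crux, maximum-principle bounds for $\D_x w,\D_t w$ using the hypotheses on $\dx U$, $\dt U/U$ and the derivative ratios in (O), and the inversion of the Crocco map via $y=\int_0^{\eta}d\eta'/w$ which converts $w\asymp(1-\eta)$ into the exponential bounds \eqref{bound-u} --- grafted onto a different existence mechanism (regularizing $w^2\mapsto w^2+\kappa$ and passing to the limit) from the discretization scheme used in the cited source. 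Both mechanisms are standard for this theorem; the a priori estimates, which you correctly identify as the real content, are the same in either case.

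Two places where your sketch is thinner than what a complete argument requires. First, the degeneracy is at $\eta=1$, where $w$ vanishes linearly: ``interior parabolic estimates'' do not reach that boundary, so the uniform-in-$\kappa$ compactness needed to pass $\kappa\to0$ must come from weighted estimates adapted to the factor $(1-\eta)$, not from interior regularity alone. Second, the final clause of the theorem (boundedness of $\dy^2u$ and $\dy^3u$) translates, via $\dy u=Uw$ and the chain rule, into bounds on $w\,\D_\eta w$ and $w\bigl((\D_\eta w)^2+w\,\D_\eta^2 w\bigr)$ up to $\eta=1$; propagating these from $t=0$ is exactly what the assumptions on the ratios $\dy^2u_0/\dy u_0$ and $\dy^3u_0\,\dy u_0/\dy^2u_0$ in (O) are designed for, and it requires its own maximum-principle (or barrier) argument for $\D_\eta w$ and $w\,\D_\eta^2 w$, not merely interior estimates. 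Neither point invalidates the outline, but both are where the actual work in \cite{Ole} lies.
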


\begin{proof} In fact, the authors in \cite[Section 4.1, Chapter 4]{Ole} established the theorem in the case $x\in [0,X]$ with zero boundary conditions at $x=0$. Their analysis is based on the line method to discretize the $t$ and $x$ variables and to solve a set of second order differential equations in variable $\eta$. It is straightforward to check that these lines of analysis work as well in the periodic case $x \in \TT$ with minor changes in the choice of boundary conditions.  We thus omit to repeat the proof here. 
\end{proof}

\myskip
Using the estimates in Theorem \ref{theo-Oleinik}, we are able to prove that
\begin{theorem}\label{theo-wellposed}
The Cauchy problem \eqref{prandtl} under the assumption {\em (O)} is well-posed in the sense of Definition \ref{def-wellposed}, with some constant $\alpha$ and some continuous functions $T(\cdot,\cdot)$ and $C(\cdot,\cdot)$ appeared in the 
stability estimate \eqref{stab-ineq} that depend on $\theta_0,C_0$ in our assumption {\em (O)}. 
\end{theorem}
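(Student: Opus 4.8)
The plan is to prove $H^1$-stability for two Oleinik solutions $u_1, u_2$ by working in the Crocco coordinates, where the monotonicity is built in and the troublesome term $\partial_x u$ is controlled. First I would let $w_1, w_2$ be the Crocco unknowns associated to $u_1, u_2$ via $\eta_j = u_j/U$, $w_j = \partial_y u_j / U$, each solving \eqref{prandtl-Crocco} with the bounds \eqref{bound-w}–\eqref{bound-u} from Theorem \ref{theo-Oleinik}. The difference $\bar w := w_1 - w_2$ solves a linear parabolic equation with a \emph{degenerate} diffusion coefficient $w_1^2$ (vanishing like $(1-\eta)^2$ at $\eta = 1$) plus transport and lower-order terms, and a Robin-type boundary condition at $\eta = 0$ obtained by linearizing $(w\partial_\eta w + \partial_x U + \partial_t U/U)\vert_{\eta=0} = 0$, namely $(w_1 \partial_\eta \bar w + \bar w \,\partial_\eta w_2)\vert_{\eta=0} = 0$. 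The key is to run a weighted energy estimate: multiply the equation for $\bar w$ by $\bar w$ times a suitable weight $\rho(\eta)$ (a negative power of $1-\eta$, compatible with the degeneracy, as in Oleinik's own a priori estimates), integrate over $\TT \times (0,1)$, and use \eqref{bound-w} to absorb the bad terms. The boundary term at $\eta = 0$ is handled using the linearized Robin condition together with $w_1 \ge \theta_1$ there; the boundary term at $\eta = 1$ is killed by the weight and the vanishing of $w_1$. This yields $\frac{d}{dt}\|\bar w(t)\|_{L^2_\rho}^2 \le C \|\bar w(t)\|_{L^2_\rho}^2$ on $[0,T]$, hence by Gronwall a stability bound for $\bar w$ in the weighted $L^2$; differentiating the equation once in $x$ and repeating gives control of $\partial_x \bar w$ as well. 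One then also needs the difference of the outer data to enter, but since both solutions share the same $U$ (the Euler flow is fixed, constant on the boundary in our setting), only the difference of initial data appears.

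The second step is to transfer this weighted Crocco estimate back to the physical $H^1_{x,Y}$ estimate for $\bar u := u_1 - u_2$ demanded by \eqref{stab-ineq}. Here one uses the change-of-variables identities: $\partial_Y u_j = U w_j(\eta_j)$, and $\bar u = u_1 - u_2$ can be recovered by integrating $\partial_Y \bar u$ from $Y = 0$ (where both vanish), while $\partial_Y \bar u = U(w_1(\eta_1) - w_2(\eta_2))$ splits as $U(w_1(\eta_1) - w_1(\eta_2)) + U\,\bar w(\eta_2)$; the first piece is controlled by $\|\partial_\eta w_1\|_\infty |\eta_1 - \eta_2| = \|\partial_\eta w_1\|_\infty |\bar u|/U$, which after a Gronwall-in-$Y$ argument (or directly using the exponential decay $e^{-\theta_1 y} \le 1 - u_j/U \le e^{-\theta_2 y}$) gives $\|\bar u\|_{L^2_{x,Y}} + \|\partial_Y \bar u\|_{L^2_{x,Y}} \lesssim \|\bar w\|_{L^2_\rho}$ on $[0,T]$, with the exponential weight in $Y$ absorbed because the Crocco weight $(1-\eta)^{-k}$ corresponds to $e^{k\theta_1 y}$-type growth while $\bar w$ itself decays like $(1-\eta)$. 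For $\partial_x \bar u$ one uses $\partial_x u_j = -U \partial_\eta(\text{something}) \ldots$ — more simply, differentiate $\partial_Y u_j = U w_j(t,x,u_j/U)$ in $x$ and solve the resulting ODE in $Y$ for $\partial_x \bar u$, feeding in the bound on $\partial_x \bar w$. Finally the initial-data term on the right side of \eqref{stab-ineq} is obtained by the same transfer in reverse at $t = 0$: $\|e^{\alpha Y}[u_0^1 - u_0^2]\|_{H^m_{x,Y}}$ controls $\|w_0^1 - w_0^2\|_{L^2_\rho}$ and its $x$-derivatives, which is where the gap between $m = 1$ and a larger $m$ is spent (one loses derivatives passing through $w_0 = \partial_y u_0/U$ and the Crocco map), and the existence time $T$ and constant $C$ come out depending only on the Oleinik constants $\theta_0, C_0$ and the $H^m$ norms of the data, as required.

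The main obstacle I expect is the degeneracy of the diffusion coefficient $w^2 \sim (1-\eta)^2$ at $\eta = 1$ combined with the Robin boundary condition at $\eta = 0$: choosing a single weight $\rho(\eta) = (1-\eta)^{-2k}$ (or similar) that simultaneously (i) makes the $\eta = 1$ boundary term from integration by parts non-positive, (ii) keeps the commutator terms $[\partial_\eta, \rho]$ and the first-order terms $A\partial_\eta \bar w$, $B\bar w$ absorbable into $\|\bar w\|_{L^2_\rho}^2$ using only the signs/bounds of $A$, $B$ from \eqref{bound-w} and assumption (O), and (iii) after the physical change of variables matches the prescribed exponential weight $e^{\alpha Y}$ on the initial data — requires the same delicate bookkeeping that underlies Oleinik's existence proof. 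A secondary technical point is justifying the energy estimate for distributional solutions (the stability inequality is asserted for distributional solutions in $U + L^\infty(]0,T[;H^1)$, which a priori need not be smooth enough to test against $\rho\bar w$); this is handled by noting that Theorem \ref{theo-Oleinik} already furnishes the needed higher regularity $\partial_t u, \partial_x u, \partial_y \partial_x u, \partial_y^2 u, \partial_y^3 u \in L^\infty$ for Oleinik solutions, so the computation is legitimate, and uniqueness within the Oleinik class follows from the same energy estimate with $\bar w(0) = 0$.
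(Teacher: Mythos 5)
Your proposal is correct in its essentials and its centerpiece coincides with the paper's: both arguments hinge on a weighted $L^2$ energy estimate in Crocco variables for $w_1-w_2$ and $\partial_x(w_1-w_2)$, with a weight that is a negative power of $1-\eta$ (the paper takes $e^{-k\eta}(1-\eta)^{-\beta}$ for any $\beta<3$, the factor $e^{-k\eta}$ with $k$ large being the specific device that turns the $\eta=0$ boundary contributions --- rewritten via the linearized Robin condition $w_1\partial_\eta\phi=-\partial_\eta w_2\,\phi$ --- into a good sign), and both lean on Oleinik's Theorem \ref{theo-Oleinik} for existence, uniqueness, the regularity needed to justify the integrations by parts, and the pointwise bounds $w_j\sim(1-\eta)$, $|w_{jx}|\le C(1-\eta)$ that make the degenerate diffusion compatible with the weight. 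Where you genuinely diverge is the transfer back to $H^1_{x,Y}$. The paper does \emph{not} recover $z=u_1-u_2$ and $z_y$ from the Crocco estimate: it proves $\tfrac{d}{dt}\|z\|^2$, $\tfrac{d}{dt}\|z_y\|^2$ and $\tfrac{d}{dt}\|y^nz_y\|^2\lesssim\|z\|^2+\|z_y\|^2+\|z_x\|^2$ by direct energy estimates in the physical variables (the nonlocal term $\int \partial_y u_2\,hz$ being handled by Cauchy--Schwarz against the exponential decay of $u_2-U$), and invokes the Crocco machinery only for the single quantity the physical energy method cannot close, namely $\|z_x\|$: testing the $x$-differentiated equation against $z_x$ produces the uncontrollable term $u_{2y}h_xz_x$, so instead $z_x$ is bounded \emph{pointwise} by $C(|z|+y|z_y|)$ plus the weighted Crocco integral, by differentiating $y=\int_0^{u/U}d\eta'/w$ in $x$ and splitting into the regions $u_1\ge u_2$ and $u_1\le u_2$ to keep $|w_j|/(1-u_i/U)$ bounded. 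Your plan routes $\bar u$ and $\partial_Y\bar u$ through the inverse Crocco map as well; this can be made to work but is more delicate than you indicate, and each approach buys something: yours is more uniform, the paper's avoids most of the inverse-map bookkeeping.

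The one soft spot you should repair is the ``Gronwall-in-$Y$'' recovery of $\bar u$. It yields
\begin{equation*}
|\bar u(Y)|\;\le\;\int_0^Y e^{\|\partial_\eta w_1\|_{\infty}(Y-Y')}\,U\,|(w_1-w_2)(\eta_2(Y'))|\,dY',
\end{equation*}
and the growth rate $\|\partial_\eta w_1\|_\infty$ has no reason to be dominated by the decay rate $\theta_1$ available for $(w_1-w_2)(\eta_2(\cdot))$, so the $L^2_Y$ bound need not close. Either subtract the identities $\int_0^{u_j/U}d\eta'/w_j=y$ to get $|\bar u|\lesssim w_1(\xi)\int_0^{u_2/U}|w_1-w_2|/(w_1w_2)\,d\eta'$ without any Gronwall factor, or simply adopt the paper's direct physical-space estimates for $z$ and $z_y$. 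Finally, note that the admissible exponent $\beta$ must be taken \emph{close to} $3$ at the transfer stage (so that $(3-\beta)\theta_2\le\theta_1$ and the Jacobian factor $e^{(3-\beta)\theta_2 y}$ is absorbed by $|\partial_y u_j|^2\le Ce^{-2\theta_1 y}$); your weight must be chosen with this endgame in mind, and it is exactly this choice that produces the final exponential weight $e^{\alpha y}$, $\alpha=(\beta-1)\theta_2/2$, and the loss of one derivative ($m=2$ versus the $H^1$ norm on the left of \eqref{stab-ineq}) in the statement.
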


\myskip
In the proof, we need the following lemma.
\begin{lemma}\label{lem-L2stab-w}
Under the same assumptions as in Theorem \ref{theo-Oleinik}, we obtain 
\begin{equation}\label{I-t-est}
I(t) \le C I(0), \qquad 0\le t\le T,
\end{equation}
with $$I(t):= \int_{\TT\times [0,1]}\Big[ \frac{|w_{1x} - w_{2x}|^2}{(1-\eta)^\beta} + \frac{|w_{1} - w_{2}|^2}{(1-\eta)^\beta}\Big](t,x,\eta) dxd\eta  \Big], \qquad \forall 0\le \beta<3,
$$
for arbitrary two solutions $w_1,w_2$ to \eqref{prandtl-Crocco}. 
\end{lemma}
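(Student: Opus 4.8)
The plan is to derive a Gronwall-type differential inequality for $I(t)$ using weighted energy estimates on the Crocco equation \eqref{prandtl-Crocco}, exploiting the parabolic term $w^2\D_\eta^2 w$ together with the two-sided bound $\theta_1(1-\eta)\le w\le\theta_2(1-\eta)$ from Theorem \ref{theo-Oleinik}. First I would set $w := w_1 - w_2$ and subtract the equations for $w_1$ and $w_2$; the difference solves a linear parabolic equation of the form $\dt w + \eta U\dx w - A\D_\eta w - Bw - w_1^2\D_\eta^2 w = (w_1+w_2)(\D_\eta^2 w_2)\, w + \text{(lower-order terms in $w$)}$, where the right-hand side is linear in $w$ and its coefficients are controlled by the bounds \eqref{bound-w}. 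The key structural point is that $w_1^2/(1-\eta)^2$ is bounded above and below by positive constants, so the degenerate diffusion $w_1^2\D_\eta^2 w$ behaves like $(1-\eta)^2\D_\eta^2 w$ near $\eta=1$, which is exactly what is needed to absorb the singular weight $(1-\eta)^{-\beta}$ for $\beta<3$.

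The main steps, in order: (1) multiply the equation for $w=w_1-w_2$ by $w\,(1-\eta)^{-\beta}$ and integrate over $\TT\times(0,1)$; integrate the diffusion term by parts in $\eta$, producing $\int w_1^2(1-\eta)^{-\beta}|\D_\eta w|^2$ plus boundary contributions at $\eta=0,1$ and a commutator term $\int w_1^2 \D_\eta w\cdot w\,\D_\eta[(1-\eta)^{-\beta}]$. Using $w_1\sim(1-\eta)$ and $\D_\eta[(1-\eta)^{-\beta}]\sim(1-\eta)^{-\beta-1}$, this commutator is $\sim\int (1-\eta)^{1-\beta}|\D_\eta w||w|$, which is absorbed into the good term $\int(1-\eta)^{2-\beta}|\D_\eta w|^2$ and the bulk term $\int(1-\eta)^{-\beta}|w|^2$ by Cauchy–Schwarz (this is where $\beta<3$ enters: the weight $(1-\eta)^{2-\beta}$ must still be a genuine gain). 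The boundary term at $\eta=1$ vanishes because $w_1^2\sim(1-\eta)^2$ decays faster than $(1-\eta)^\beta$ blows up (again needing $\beta<3$); the boundary term at $\eta=0$ is handled using the Robin-type condition $(w\D_\eta w+\dx U+\dt U/U)|_{\eta=0}=0$, whose difference gives $w_1\D_\eta w + w\,\D_\eta w_2 = 0$ at $\eta=0$, turning the boundary term into a controllable lower-order expression. (2) Differentiate \eqref{prandtl-Crocco} in $x$, subtract, and repeat the same weighted estimate for $w_x = w_{1x}-w_{2x}$; the transport coefficient $\eta U$ produces an extra term $\eta(\dx U)\dx w$ which is bounded by the bulk norm, and differentiating the nonlinearity produces terms like $2w_1 w_{1x}\D_\eta^2 w + w_1^2\D_\eta^2 w_x + \dots$, all of which are either absorbed by the diffusion or bounded by $I(t)$ using \eqref{bound-w}. (3) Add the two estimates to obtain $\tfrac{d}{dt}I(t)\le C\,I(t)$ with $C$ depending only on $\theta_1,\theta_2$ and the $U$-data, and conclude by Gronwall's inequality.

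The hard part will be step (1)'s bookkeeping near $\eta=1$: one has to check that every boundary term and every commutator term generated by the singular weight $(1-\eta)^{-\beta}$ is genuinely controlled, and that the degenerate coefficient $w_1^2$ supplies exactly enough decay — this is the precise reason the range is $0\le\beta<3$ and not, say, $\beta<2$, since the coupling term $(w_1+w_2)\D_\eta^2 w_2\cdot w$ must be integrated against $(1-\eta)^{-\beta}$ and $\D_\eta^2 w_2$ is only known to be $O(1)$ via \eqref{bound-w} after one more differentiation, which costs another power of $(1-\eta)$. A secondary technical point is justifying the integrations by parts and the manipulation of weak derivatives, which is legitimate by the regularity asserted at the end of Theorem \ref{theo-Oleinik} together with a standard approximation/cutoff argument near $\eta=1$; I would state this once and not belabor it.
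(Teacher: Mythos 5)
Your proposal follows essentially the same route as the paper: form $\phi=w_1-w_2$, do a weighted energy estimate with weight $(1-\eta)^{-\beta}$ using the degenerate dissipation $w_1^2\D_\eta^2$, repeat for $\phi_x$, combine, and apply Gronwall. Two places where your sketch is too optimistic, though neither is fatal. First, at $\eta=0$ the Robin condition $w_1\phi_\eta=-w_{2\eta}\phi$ converts the boundary term into $\int_{\{\eta=0\}}w_1w_{2\eta}|\phi|^2\,dx$, which is a \emph{trace} of $\phi$, not a ``lower-order expression'' controlled by the bulk $L^2$ norm; the paper handles this by inserting an extra weight $e^{-k\eta}$ with $k$ large, whose derivative generates a signed boundary term $-\tfrac{k}{4}\int_{\{\eta=0\}}w_1^2|\phi|^2\,dx$ (using $w_1(t,x,0)\ge\theta_1>0$) that dominates all indefinite boundary contributions from both the $\phi$ and $\phi_x$ estimates. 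You would need either this device or an explicit trace/interpolation inequality against the dissipation. Second, the cross term $2w_1w_{1x}\phi_{\eta\eta}$ tested against $\phi_x(1-\eta)^{-\beta}$ leaves, after integration by parts, a term of the form $\int w_1^2|\phi_\eta|^2(1-\eta)^{-\beta}$, which is neither absorbed by the diffusion in the $\phi_x$ equation (that controls $\phi_{x\eta}$) nor bounded by $I(t)$; it is absorbed only by the dissipation term of the $\phi$ estimate, which is why the paper adds the $\phi_x$ inequality to a \emph{large multiple} $M$ of the $\phi$ inequality rather than simply summing them. With these two adjustments your argument closes exactly as in the paper.
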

\begin{proof}[Proof of Lemma \ref{lem-L2stab-w}] We consider $w_1,w_2$ being solutions to \eqref{prandtl-Crocco}. We first note that $I(t)$ is well-defined for $\beta<3$ by the bounds in Theorem \ref{theo-Oleinik} that $|w_j|\le C(1-\eta)$ and $|w_{jx}|\le C(1-\eta)$. Let us introduce $\phi = w_1-w_2$. Then, $\phi$ solves
$$\left\{
\begin{array}{rlll} 
\phi_t + \eta U \phi_x - A\phi_\eta - B\phi - (w_1+w_2)\D_\eta^2w_2 \phi &=& w_1^2 \D^2_\eta \phi, \quad &0<\eta<1,x\in \TT\\
(w_1 \phi_\eta +w_{2\eta}\phi)\vert_{\eta=0}&=& 0, \\
\phi \vert_{\eta=1}&=& 0, 
\end{array}
\right.
$$
for $A,B$ being defined as in \eqref{prandtl-Crocco}. In particular, we have $|A|\le C(1-\eta)$ and $|B|\le C$. Multiplying the equation by $e^{-k\eta}\phi/(1-\eta)^\beta$ and integrating it over $\TT\times (0,1)$, we easily obtain 
$$\frac{1}{2}\frac{d}{dt} \int_{\TT\times (0,1)} \frac{e^{-k\eta}|\phi|^2}{(1-\eta)^\beta} dxd\eta \;=\;- \int_{\TT\times (0,1)} \Big[ \eta U \phi_x - A\phi_\eta - B\phi - (w_1+w_2)\D_\eta^2w_2 \phi - w_1^2\phi_{\eta\eta}\Big] \frac{e^{-k\eta}\phi}{(1-\eta)^\beta} dxd\eta.$$
We treat each term on the right-hand side. Using the bounds on $A,B$ and on $w_j,\D_\eta^2w_j$, it is easy to see that 
$$\begin{aligned}\Big|\int_{\TT\times (0,1)} \Big [  \eta U \phi_x -A\phi_\eta &- B\phi + (w_1+w_2)\D_\eta^2w_2 \phi \Big] \frac{e^{-k\eta}\phi}{(1-\eta)^\beta} dxd\eta \Big | \\\;&\le\; \epsilon \int_{\TT\times (0,1)}  \frac{e^{-k\eta}A^2 }{(1-\eta)^\beta} |\phi_\eta|^2 dxd\eta +C_\e\int_{\TT\times (0,1)} \frac{e^{-k\eta}|\phi|^2}{(1-\eta)^\beta} dxd\eta, \end{aligned}$$ for arbitrary small $\e$. 
For the last term, integration by parts yields
$$\begin{aligned} \int_{\TT\times (0,1)}  \frac{e^{-k\eta}w_1^2 }{(1-\eta)^\beta} \phi_{\eta\eta} \phi dxd\eta = &-
  \int_{\TT\times (0,1)}  \frac{e^{-k\eta}w_1^2 }{(1-\eta)^\beta} |\phi_\eta|^2 dxd\eta \\&-  \int_{\TT\times (0,1)} \D_\eta \Big(\frac{e^{-k\eta}w_1^2 }{(1-\eta)^\beta} \Big) \phi_\eta \phi dxd\eta - \int_{\TT\times \{\eta =0\}} \frac{e^{-k\eta}w_1^2 }{(1-\eta)^\beta} \phi_{\eta} \phi dx.
 \end{aligned}$$
Again, by integration by parts, we have
$$\begin{aligned} - \int_{\TT\times (0,1)} \D_\eta &\Big(\frac{e^{-k\eta}w_1^2 }{(1-\eta)^\beta} \Big) \phi_\eta \phi dxd\eta
= \frac 12\int_{\TT\times (0,1)} \D_\eta^2 \Big(\frac{e^{-k\eta}w_1^2 }{(1-\eta)^\beta} \Big) |\phi|^2 dxd\eta+ \frac12\int_{\TT\times \{\eta =0\}} \D_\eta\Big(\frac{e^{-k\eta}w_1^2 }{(1-\eta)^\beta}\Big)|\phi|^2dx.
\end{aligned}
$$
Thanks to the bounds $|w_j|\le C(1-\eta)$, we have 
$$\D_\eta^2 \Big(\frac{e^{-k\eta}w_1^2 }{(1-\eta)^\beta} \Big)\le C \frac{e^{-k\eta} }{(1-\eta)^\beta}.$$ 
Collecting all boundary terms, we need to estimate
$$\frac12\int_{\TT\times \{\eta =0\}} \Big[ - k\frac{e^{-k\eta}w_1^2 }{(1-\eta)^\beta}|\phi|^2 + \D_\eta\Big(\frac{w_1^2 }{(1-\eta)^\beta}\Big)e^{-k\eta}|\phi|^2-\frac{e^{-k\eta}w_1^2 }{(1-\eta)^\beta} \phi_{\eta} \phi \Big] dx.$$
Note that at $\eta=0$, $w_1\not=0$ and $w_1\phi_\eta = -w_{2\eta}\phi$. Thus, by taking $k$ sufficiently large in the above expression, we can bound it by 
$$- \frac k4\int_{\TT\times \{\eta =0\}} w_1^2|\phi|^2 dx.$$

\myskip
Combining the above estimates and choosing $\e$ sufficiently small, with noting that $|A|\le C(1-\eta)\le Cw_1$, we thus obtain 
\begin{equation}\label{L2-w-est}\begin{aligned} \frac{d}{dt} \int_{\TT\times (0,1)} \frac{e^{-k\eta}|\phi|^2}{(1-\eta)^\beta} dxd\eta &+  \int_{\TT\times (0,1)}  \frac{e^{-k\eta}w_1^2 }{(1-\eta)^\beta} | \phi_\eta|^2 dxd\eta \\&+\int_{\TT\times \{\eta =0\}} w_1^2|\phi|^2 dx\;\le\; C\int_{\TT\times (0,1)}  \frac{e^{-k\eta}|\phi|^2}{(1-\eta)^\beta}  dxd\eta.\end{aligned}\end{equation}

\myskip
To obtain estimates for $\phi_x$, we take $x$-derivative of the equation for $\phi$ and integrate the resulting equation over $\TT\times (0,1)$ against $e^{-k\eta}\phi_x/(1-\eta)^{\beta}$. We arrive at 
\begin{equation}\label{est-phix}\begin{aligned}\frac{1}{2}\frac{d}{dt} \int_{\TT\times (0,1)} &\frac{e^{-k\eta}|\phi_x|^2}{(1-\eta)^\beta} dxd\eta 
\\\;=\;&- \int_{\TT\times (0,1)} \Big[ \eta U \phi_{xx} +\eta U_x\phi_x- A_x\phi_\eta - A\phi_{x\eta} - B_x\phi-B\phi_x \\&- ((w_1+w_2)\D_\eta^2w_2)_x \phi  - 
(w_1+w_2)\D_\eta^2w_2\phi_x - w_1^2 \phi_{x\eta\eta} - 2w_1w_{1x}\phi_{\eta\eta}\Big] \frac{e^{-k\eta}\phi_x}{(1-\eta)^\beta} dxd\eta.\end{aligned}\end{equation}
 Similarly as in deriving the estimate \eqref{L2-w-est}, integration by parts and the bounds on $A,B, w_j$ easily yields
 \begin{equation}\label{est-1-phix}\begin{aligned} \int_{\TT\times (0,1)}  \Big[ w_1^2 \phi_{x\eta\eta} &+ 2w_1w_{1x}\phi_{\eta\eta} \Big] \frac{e^{-k\eta}\phi_x}{(1-\eta)^\beta} dxd\eta \\\le& -
 \frac12 \int_{\TT\times (0,1)}  \frac{e^{-k\eta}w_1^2 }{(1-\eta)^\beta} |\phi_{x\eta}|^2 dxd\eta + C \int_{\TT\times (0,1)} \frac{|\phi_x|^2+w_1^2|\phi_\eta|^2}{(1-\eta)^\beta}e^{-k\eta} dxd\eta \\&- \int_{\TT\times (0,1)} \D_\eta \Big[ \frac{e^{-k\eta}\phi_x}{(1-\eta)^\beta} \Big] \phi_{x\eta}\phi_xdxd\eta+ \int_{\TT\times \{\eta=0\}} \Big[ w_1^2 \phi_{x\eta} + 2w_1w_{1x}\phi_{\eta} \Big] \phi_x dx.\end{aligned}\end{equation}
 Here, we note that there is a crucial factor of $w_1^2$ in front of the term $|\phi_\eta|^2$ thanks to the bounds: $w_j \sim (1-\eta)$ and $|w_{jx}|\le C(1-\eta)$. Again, applying integration by parts to the third term on the right-hand side yields
$$\begin{aligned}- \int_{\TT\times (0,1)} \D_\eta &\Big[ \frac{e^{-k\eta}w_1^2}{(1-\eta)^\beta} \Big] \phi_{x\eta}\phi_xdxd\eta 
\\&= \frac12 \int_{\TT\times (0,1)} \D^2_\eta \Big[ \frac{e^{-k\eta}w_1^2}{(1-\eta)^\beta} \Big] |\phi_{x}|^2dxd\eta + \int_{\TT\times\{\eta=0\}} \D_\eta \Big[ \frac{e^{-k\eta}w_1^2}{(1-\eta)^\beta} \Big] |\phi_x|^2dx,\end{aligned}$$ 
 where the last boundary term is clearly bounded by $$-\frac k2  \int_{\TT\times\{\eta=0\}} w_1^2|\phi_x|^2dx.$$ 
 We now estimate the boundary term in \eqref{est-1-phix}. We recall that at the boundary $\eta=0$, we have $w_1\phi_\eta = - w_{2\eta}\phi$. Thus, 
 $$w_1^2\phi_{x\eta} = w_1(-w_{2\eta}\phi_x - w_{2x\eta}\phi - w_{1x}\phi_\eta) =-w_1(w_{2\eta}\phi_x + w_{2x\eta}\phi ) + w_{1x}w_{2\eta}\phi. $$
 That is, the normal derivative $\phi_\eta$ on the boundary can always be eliminated to yield    
  $$\int_{\TT\times \{\eta=0\}} \Big[ w_1^2 \phi_{x\eta} + 2w_1w_{1x}\phi_{\eta} \Big] \phi_x dx \le C\int_{\TT\times\{\eta=0\}} (|\phi|^2+|\phi_x|^2)dx.$$ 

\myskip
The remaining terms on the right-hand side of \eqref{est-phix} are again easily bounded by 
$$C \int_{\TT\times (0,1)} e^{-k\eta} \frac{|\phi|^2+|\phi_x|^2}{(1-\eta)^\beta} dxd\eta. $$

\myskip
Putting these estimates into \eqref{est-phix}, we have obtained 
\begin{equation}\label{L2-wx-est}\begin{aligned}\frac{d}{dt} \int_{\TT\times (0,1)} &\frac{e^{-k\eta}|\phi_x|^2}{(1-\eta)^\beta} dxd\eta 
\;\le\;
C \int_{\TT\times (0,1)} \frac{|\phi|^2+|\phi_x|^2 + |w_1|^2|\phi_\eta|^2}{(1-\eta)^\beta} dxd\eta 
+ C\int_{\TT\times\{\eta=0\}}|\phi|^2dx
.\end{aligned}\end{equation}

\myskip
Adding together this inequality with a large constant $M$ times the inequality \eqref{L2-w-est}, we can get rid of the boundary term and the term involving $|\phi_\eta|^2$ on the right-hand side of \eqref{L2-wx-est} and thus obtain 
\begin{equation}\label{L2-wx-final}\begin{aligned}\frac{d}{dt} \int_{\TT\times (0,1)} &e^{-k\eta}\frac{M|\phi|^2+|\phi_x|^2}{(1-\eta)^\beta} dxd\eta 
\;\le\;
C(M) \int_{\TT\times (0,1)} e^{-k\eta}\frac{M|\phi|^2+|\phi_x|^2}{(1-\eta)^\beta} dxd\eta
.\end{aligned}\end{equation}

\myskip
The claimed estimate \eqref{I-t-est} thus immediately follows from \eqref{L2-wx-final} by the standard Gronwall inequality, and this completes the proof of Lemma \ref{lem-L2stab-w}. 
\end{proof}

\myskip
We are now ready to give
\begin{proof}[Proof of Theorem \ref{theo-wellposed}] We only need to check the 
stability estimate \eqref{stab-ineq}. Let $U(t,x)$ be a fixed Euler flow, and take $u_{01}(x,y)$ and $u_{02}(x,y)$ be arbitrary smooth functions satisfying the assumption (O). Let $u_1,u_2$ be solutions to \eqref{prandtl} and $w_1,w_2$  the corresponding solutions to \eqref{prandtl-Crocco} constructed by Theorem \ref{theo-Oleinik}. Set $z = u_1 - u_2$ and $h = v_1-v_2$ with $v_j$ being determined through the divergence-free condition with $u_j$. Then, $z$ and $h$ solve 
\begin{equation}\label{prandtl-zeqs}\dt z + u_1 \dx z + z \dx u_2 + v_1 \dy z + h \dy u_2 = \dy^2 z, \qquad h = -\int_0^y \dx z dy',\end{equation}
with $z\vert_{y=0} = z\vert_{y=+\infty}=0$. 

\myskip
Multiplying the equation for $z$ by $e^{-kt}z$ for some large $k$, taking integration over $\TT\times \RR_+$, and applying integration by parts, we obtain 
\begin{equation}\label{ineq1} \frac 12 \frac{d}{dt}\int_{\TT\times \RR_+} |z|^2 dxdy + \int_{\TT\times \RR_+} \Big[(k+ \dx u_2) |z|^2 + \dy u_2 hz + |\dy z|^2\Big]dxdy \;=\; 0.\end{equation}
By the definition of $h$, we can estimate
$$ \begin{aligned}\Big|\int_{\TT\times \RR_+} \dy u_2 hz dxdy\Big| 
&= \Big| \int_{\TT\times \RR_+} \dy (u_2 - U) z \Big(\int_0^y \dx z dy' \Big)dxdy\Big| 
\\
&\le \sup_{t,x}  \Big(\int_{\RR_+}y^{1/2}\dy(u_2 - U) dy\Big) \| z\| \|\dx z\|
\end{aligned}$$
for some $\alpha<1/2$, where $\|\cdot\|$ denotes the standard $L^2$ norm on $\TT\times \RR_+$. Thanks to bounds \eqref{bound-u}, $u_2$ converges exponentially to $U$ as $y\to \infty$ and thus the integral $\int_{\RR_+}y^{1/2}\dy(u_2 - U) dy$ is finite. In addition, since the derivatives $\dx u_j, \dy u_j$ are bounded, by taking $k$ sufficiently large, the identity \eqref{ineq1} yields
\begin{equation}\label{L2-est-z} \frac{d}{dt}\int_{\TT\times \RR_+} |z|^2 dxdy + \int_{\TT\times \RR_+} \Big[ |z|^2+ |z_y|^2\Big] dxdy \;\le\;  
C\|z_x\|^2.\end{equation} 

\myskip
We will next derive estimates for $z_y$. For this, we take derivative with respect to $y$ to the equation for $z$ and multiply the resulting equation by $\dy z$. With noting that $z\vert_{y=0}=0$ and $z_{yy}\vert_{y=0}=0$ (obtained by setting $y=0$ in \eqref{prandtl-zeqs}), easy computations yield 
$$\begin{aligned}\frac12\frac{d}{dt}& \int_{\TT\times \RR_+} |z_y|^2dxdy
+  \int_{\TT\times \RR_+} |z_{yy}|^2 dxdy+ \int_{\TT\times \RR_+} \Big[\frac 12 u_1 \dx |z_y|^2 
\\&+ (v_{1y}+u_{2x}) |z_y|^2 +u_{1y}z_xz_y+ u_{2y}h_yz_y+ u_{2xy}zz_y + v_1\frac12\dy |z_y|^2 +u_{2yy}hz_y\Big]dxdy \;=\; 0.\end{aligned}$$ 
Again, by using the boundedness of $u_{jx},u_{jxy},u_{jyy}$, the divergence-free condition $h_y=-z_x$, and similar estimates on the term involving $h$ as above, we easily get 
\begin{equation}\label{L2-est-zy} \frac{d}{dt}\int_{\TT\times \RR_+} |z_y|^2 dxdy  \;\le\;   
C \Big( \|z\|^2 + \|z_y\|^2 + \| z_x\|^2\Big).\end{equation} 
We note that by using the fact that the derivatives $u_{jx},u_{jxy},u_{jyy}$ are not only bounded, but also decay exponentially in $y$, similar estimates as done above also yield
\begin{equation}\label{L2-est-yzy} \frac{d}{dt}\int_{\TT\times \RR_+} y^n|z_y|^2 dxdy  \;\le\;   
C \Big( \|z\|^2 + \|z_y\|^2 + \| z_x\|^2\Big), \qquad \forall n\ge 0.\end{equation} 

\myskip
Finally, we may wish to give similar estimates for $z_x$. That is, taking $x$-derivative to the equation for $z$, testing the resulting equation by $z_x$, and using the boundary condition ${z_x}\vert_{y=0}=0$, one may get
\begin{equation}\label{ineq3} \begin{aligned}\frac{d}{dt}& \int_{\TT\times \RR_+} |z_x|^2dxdy+  \int_{\TT\times \RR_+} |z_{xy}|^2 dxdy\\&+ \int_{\TT\times \RR_+} \Big[(u_{1x}+u_{2x}) |z_x|^2 
+ u_{2xx}zz_x + v_{1x}z_xz_y + u_{2y}h_xz_x+u_{2xy}hz_x\Big]dxdy \;=\; 0.\end{aligned}\end{equation} 
However, it is not at all immediate to estimate the term $u_{2y}h_xz_x$ in the above identity to yield a similar bound as in \eqref{L2-est-zy} since $h$ has the same order as $z_x$ by its definition (see \eqref{prandtl-zeqs}). 

\myskip
Therefore, we shall derive estimates for $z_x$ through the equation \eqref{prandtl-Crocco} and the estimates on $w$ obtained in Lemma \ref{lem-L2stab-w}. First, we recall that $u$ is defined through $w$ by the relation (see, for example, \cite[Eq. (4.1.52)]{Ole}):
$$ y = \int_0^{u(t,x,y)/U(t,x)} \frac{1}{w(t,x,\eta')} d\eta'.$$
Differentiating this identity with respect to $x$, we immediately obtain{\footnote{There is an unfortunate typo in \cite[Eq. (4.1.53)]{Ole} where the integral in \eqref{ux-expression} was $\int_0^{u/U}\frac{w_x}{w}(t,x,\eta')d\eta'$.}} 
\begin{equation}\label{ux-expression} u_x = u\frac{U_x}{U} + wU \int_0^{u/U}\frac{w_x}{w^2}(t,x,\eta')d\eta',\end{equation}
for $u$ and $w$ being solutions to \eqref{prandtl} and \eqref{prandtl-Crocco}. We apply this expression to $u_1,w_1$ and $u_2,w_2$, respectively and derive an estimate for $z_x = u_{1x}-u_{2x}$.  
In regions where $u_1\ge u_2$, it will appear to be convenient to estimate $z_x$ as follows:
\begin{equation}\label{zx-estimate1}\begin{aligned}|z_x| 
\le &C\Big[|z| + |w_1(t,x,u_1/U)-w_2(t,x,u_2/U)|  \int_0^{u_2/U}\Big|\frac{w_{2x}}{w^2_2}\Big|(t,x,\eta')d\eta' 
\\
&+ |w_1|\Big|\int_{u_1/U}^{u_2/U}\frac{w_{1x}}{w_1^2}(t,x,\eta')d\eta'\Big| + |w_1|\int_0^{u_2/U}\Big|\frac{w_{1x}}{w_1^2} - \frac{w_{2x}}{w_2^2}\Big|(t,x,\eta')d\eta'  \Big].
\end{aligned}
\end{equation}
Whereas in regions where $u_1\le u_2$ we estimate 
\begin{equation}\label{zx-estimate2}\begin{aligned}|z_x| 
\le &C\Big[|z| + |w_1(t,x,u_1/U)-w_2(t,x,u_2/U)|  \int_0^{u_1/U}\Big|\frac{w_{1x}}{w^2_1}\Big|(t,x,\eta')d\eta' 
\\
&+ |w_2|\Big|\int_{u_1/U}^{u_2/U}\frac{w_{2x}}{w_2^2}(t,x,\eta')d\eta'\Big| + |w_2|\int_0^{u_1/U}\Big|\frac{w_{1x}}{w_1^2} - \frac{w_{2x}}{w_2^2}\Big|(t,x,\eta')d\eta'  \Big].
\end{aligned}
\end{equation}
From the definition $w_j(t,x,u_j/U) = \dy u_j(t,x,y)$, we have $|w_1(t,x,u_1/U)-w_2(t,x,u_2/U)|= |z_y|$. Also, note that $|w_{jx}/w_j|$ is uniformly bounded. We have $$ \int_0^{u_j/U}\Big|\frac{w_{jx}}{w^2_j}\Big|(t,x,\eta')d\eta' \;\le\; C  \int_0^{u_j/U}\Big|\frac{1}{w_j}\Big|(t,x,\eta')d\eta' = Cy,$$
and 
$$|w_j|\Big|\int_{u_1/U}^{u_2/U}\frac{w_{jx}}{w_j^2}(t,x,\eta')d\eta'\Big| \le C|w_j|\Big|\int_{u_1/U}^{u_2/U}\frac{1}{1-\eta'}d\eta' \Big|\le C\Big(\frac{|w_j|}{1-u_1/U} + \frac{|w_j|}{1-u_2/U}\Big)|u_1-u_2|.$$
Now, if $u_1\ge u_2$, we use the estimate \eqref{zx-estimate1} and the fact that $|w_j|\le C(1-u_j/U)$. We thus obtain 
$$\frac{|w_1|}{1-u_1/U} + \frac{|w_1|}{1-u_2/U} \le 2 \frac{|w_1|}{1-u_1/U} \le C.$$
Similarly, if $u_1\le u_2$, we use \eqref{zx-estimate2} and replace $w_1$ by $w_2$ in the above inequality, leading to the similar uniform bound. This explains our choice of expressions in \eqref{zx-estimate1}-\eqref{zx-estimate2}. By combining these estimates, the second and third terms in \eqref{zx-estimate1} when $u_1\ge u_2$ and in \eqref{zx-estimate2} when $u_1\le u_2$ are bounded by 
$$C(|z| + y|z_y|).$$

\myskip
Finally,  we give estimates for the last term in inequalities \eqref{zx-estimate1} and \eqref{zx-estimate2}. Using the estimates on $w,w_x$, we have  
$$\Big|\frac{w_{1x}}{w_1^2} - \frac{w_{2x}}{w_2^2}\Big| \le C \frac{|w_{1x} - w_{2x}|}{(1-\eta')^2} + C\frac{|w_1-w_2|}{(1-\eta')^2}, \qquad \forall \eta'\in (0,1),$$
which together with the standard H\"older inequality implies that 
$$\begin{aligned}\int_{\TT\times \RR_+} |w_j|^2\Big|\int_0^{u_k/U}&\Big(\frac{w_{1x}}{w_1^2} - \frac{w_{2x}}{w_2^2}\Big)(t,x,\eta')d\eta'\Big|^2 dxdy 
\\&
\le C\sup_{t,x}\int_{\RR_+} |\dy u_j|^2 (1-\frac{u_k}{U})^{\beta-2} dy  \int_{\TT\times [0,1]}\Big[ \frac{|w_{1x} - w_{2x}|^2}{(1-\eta')^\beta} + \frac{|w_{1} - w_{2}|^2}{(1-\eta')^\beta}\Big]dxd\eta'
\\&
\le C\sup_{t,x}\int_{\RR_+} e^{-2\theta_1 y} e^{(3-\beta)\theta_2y} dy  \int_{\TT\times [0,1]}\Big[ \frac{|w_{1x} - w_{2x}|^2}{(1-\eta')^\beta} + \frac{|w_{1} - w_{2}|^2}{(1-\eta')^\beta}\Big]dxd\eta'
\\&
\le C\int_{\TT\times [0,1]}\Big[ \frac{|w_{1x} - w_{2x}|^2}{(1-\eta')^\beta} + \frac{|w_{1} - w_{2}|^2}{(1-\eta')^\beta}\Big]dxd\eta',
\end{aligned}$$
for some $\beta<3$ satisfying $(3-\beta)\theta_2 \le \theta_1$.  

\myskip
Thus, we have obtained 
\begin{equation}\label{L2-est-zx}\|z_x\|^2_{L^2} \le C\Big[\|z\|^2_{L^2} + \|yz_y\|^2_{L^2} +  \int_{\TT\times [0,1]}\Big[ \frac{|w_{1x} - w_{2x}|^2}{(1-\eta)^\beta} + \frac{|w_{1} - w_{2}|^2}{(1-\eta)^\beta}\Big]dxd\eta  \Big],\end{equation} for some $\beta<3$. Now, applying Lemma \ref{lem-L2stab-w} into \eqref{L2-est-zx}, we then have the following estimate: 
\begin{equation}\begin{aligned}
\|z_x\|^2_{L^2} 
&\le C\Big[\|z\|^2_{L^2} + \|yz_y\|^2_{L^2} +  \int_{\TT\times [0,1]}\Big[ \frac{|w_{1x} - w_{2x}|^2}{(1-\eta)^\beta} + \frac{|w_{1} - w_{2}|^2}{(1-\eta)^\beta}\Big](0,x,\eta)dxd\eta  \Big].
\end{aligned}\end{equation} 

\myskip
Combining this with estimates \eqref{L2-est-z}, \eqref{L2-est-zy}, and \eqref{L2-est-yzy} and applying the standard Gronwall's inequality, we easily obtain 
\begin{equation}\label{L2-final} \|z\|^2_{H^1}(t) \le  C(T) \Big[\|z_0\|^2_{H^1} +\|yz_{0y}\|^2_{L^2}+ \int_{\TT\times [0,1]}\Big[ \frac{|w_{1x} - w_{2x}|^2}{(1-\eta)^\beta} + \frac{|w_{1} - w_{2}|^2}{(1-\eta)^\beta}\Big](0,x,\eta)dxd\eta \Big], \end{equation}
where we have denoted $z_0 = u_{01}-u_{02}$. 

\myskip
Note that $\|yz_{0y}\|^2_{L^2} \le \|e^yz_{0y}\|^2_{L^2}$. It thus remains to express the last estimate in terms of initial data $u_{01}$ and $u_{02}$. We note that for $\eta = u_1(0,x,y)/U(t,x)$, $$\begin{aligned}|w_{1} - w_{2}|(0,x,\eta) &\le |w_1(0,x,u_1/U)-w_2 (0,x,u_2/U)| + |w_2(0,x,u_1/U) - w_2(0,x,u_2/U)|
\\&\le |\dy (u_1-u_2)(0,x,y)| + |\D_\eta w_2| |u_1 - u_2|(0,x,y)
.\end{aligned}$$
In addition, for $\eta = u_1(0,x,y)/U(t,x)$, assumptions on initial data (see (O)) gives $(1-\eta)^{-1}\le Ce^{\theta_2y}$ and $|\eta_y| = |\dy u_{01}/U|\le C(1-u_{01}/U)$. Thus, we can make change of variable $\eta$ back to $y$ and estimate 
$$\begin{aligned}
\int_{\TT\times [0,1]}\frac{|w_{1} - w_{2}|^2}{(1-\eta)^\beta}(0,x,\eta)dxd\eta 
&\;\le\; C \int_{\TT\times \RR_+} e^{(\beta-1)\theta_2y} (|\dy(u_{01}-u_{02})|^2+ |u_{01} - u_{02}|^2)(x,y)\; dxdy
\\&\;\le\; C \|e^{(\beta-1)\theta_2y/2}(u_{01}-u_{02})\|_{H^1}^2.\end{aligned}
 $$
 Similarly, we have
$$\begin{aligned}|w_{1x} - w_{2x}|(0,x,\eta) &\;\le\; |\dx\dy(u_{01}-u_{02})| (x,y)+ C|\dx(u_{01} - u_{02})|(x,y), 
\end{aligned}$$
and thus 
$$\begin{aligned}
\int_{\TT\times [0,1]}\frac{|w_{1x} - w_{2x}|^2}{(1-\eta)^\beta}(0,x,\eta)dxd\eta 
\;\le\; C \|e^{(\beta-1)\theta_2y/2}(u_{01}-u_{02})\|_{H^2}^2.\end{aligned}
 $$

\myskip
Putting these into \eqref{L2-final}, we have obtained 
\begin{equation}\begin{aligned}
\|(u_1-u_2)(t)\|^2_{H^1} 
&\;\le\; C\|e^{\alpha y} (u_{01} - u_{02}) \|_{H^2}^2,
\end{aligned}\end{equation} 
for $\alpha = (\beta-1)\theta_2/2$. Theorem \ref{theo-wellposed} thus follows. 
\end{proof}

\section{Acknowledgements} This work is grown out of the previous joint work with David G\'erard-Varet \cite{GVN}, and the second author greatly thanks him for many fruitful discussions. Y. Guo's research is supported in part by a NSF grant DMS-0905255 and a Chinese NSF grant 10828103.


\end{document}